\newtheorem{Theorem}{Theorem}[section]
\newtheorem{Lemma}[Theorem]{Lemma}
\newtheorem{Observation}[Theorem]{Observation}
\newtheorem{Corollary}[Theorem]{Corollary}
\theoremstyle{definition}
\newtheorem{Definition}[Theorem]{Definition}
\newcommand{\N}{\mathbb N}
\newcommand{\Q}{\mathbb Q}
\newcommand{\R}{\mathbb R}
\newcommand{\ff}{\mathcal F}
\newcommand{\fuse}{\sim}
\title{Ordinals and recursively defined functions on the reals}
\author{Gabriel Nivasch\thanks{Department of Computer Science, Ariel University, Ariel, Israel. \texttt{gabrieln@ariel.ac.il}}, Lior Shiboli\thanks{Department of Computer Science, Ariel University, Ariel, Israel. \texttt{lior12sh@gmail.com}. Research was supported by ISF grant 1065/20.}}
\date{}
\begin{document}

\maketitle
\begin{abstract}
    We determine sufficient conditions under which certain recursively defined functions are well defined for all real inputs. Given a function $f:\mathbb R\to\mathbb R$, call a decreasing sequence $x_1>x_2>x_3>\cdots$ \emph{$f$-bad} if $f(x_1)>f(x_2)>f(x_3)>\cdots$, and call the function $f$ \emph{ordinal decreasing} if there exist no infinite $f$-bad sequences. We prove the following result: Given ordinal decreasing functions $f,g_1,\ldots,g_k,s$ that are everywhere larger than $0$, define the recursive algorithm ``$M(x)$: if $x<0$ return $f(x)$, else return $g_1(-M(x-g_2(-M(x-\cdots-g_k(-M(x-s(x)))\cdots))))$". Then $M(x)$ halts and is ordinal decreasing for all $x \in \mathbb{R}$. The recursive algorithms $M$ and $M_n$ previously studied in the context of fusible numbers by Ericskon et al.~(2022) and Bufetov et al.~(2024), respectively, are special cases of this scheme.

    Moreover, given an ordinal decreasing function $f$, denote by $o(f)$ the ordinal height of the root of the tree of $f$-bad sequences. Then we prove that, for $k\ge 2$, the function $M(x)$ defined by the above algorithm satisfies $o(M)\le\varphi_{k-1}(\gamma+o(s)+1)$, where $\gamma$ is the smallest ordinal such that $\max\{o(s),o(f),o(g_1), \ldots,\allowbreak o(g_k)\} <\varphi_{k-1}(\gamma)$.

    Keywords: Ordinal, recursive algorithm, ordinal decreasing function, fusible number, Veblen function.
\end{abstract}
\section{Introduction}
The \emph{fusible numbers} are a set of real numbers, motivated by a mathematical riddle involving fuses. Fusible numbers were defined by Erickson \cite{Eri} and further studied by Erickson, Nivasch and Xu \cite{ENX,Xu12}. In their research on fusible numbers, they examined a peculiar recursive algorithm $M$, which is the main motivation for this paper.

Algorithm $M$ accepts a real number as input, and returns a real number as output. (Given a rational input, algorithm $M$ returns a rational output.) It is defined recursively as follows:
\begin{equation}\label{eq_M}
        M(x) = 
            \begin{cases}
                -x ,& \text{if  $x < 0$;}\\
                \frac{M(x-M(x-1))}{2}, & \text{if  $x \geq 0$.}
            \end{cases}
\end{equation}
In other words, (\ref{eq_M}) defines a function $M:\mathbb R\to\mathbb R$.
For example, it can be checked that $M(1)=\frac{M(1-M(0))}{2}=\frac{1}{8}$, $M(\frac{3}{2})=\frac{1}{32}$, $M(2)=2^{-10}$, $M(\frac{5}{2})=2^{-31}$ and $M(3)=2^{-1,541,023,937}$. Figure \ref{fig:Mgraph} shows a partial plot of the function $M$.

It is not immediately obvious that $M$ is well defined for all real inputs, meaning that for every input the recursion terminates in a finite number of steps. Erickson, Nivasch and Xu \cite{Eri,ENX,Xu12} proved that in fact $M$ terminates on all real inputs. Furthermore, they proved that Peano Arithmetic cannot prove that $M$ terminates on all rational inputs, nor even on all natural inputs. Hence, in a sense, the termination of algorithm $M$ is indeed not so easy to prove. The PA-independence result was obtained by showing that $\frac{1}{M(n)}$ grows as fast as $F_{\varepsilon_0}(n-7)$ for integers $n\geq 8$. See Section \ref{subsec_fusible} below for background on fusible numbers and their relation to algorithm $M$.

\begin{figure}
            \centering
            \includegraphics[scale=0.3]{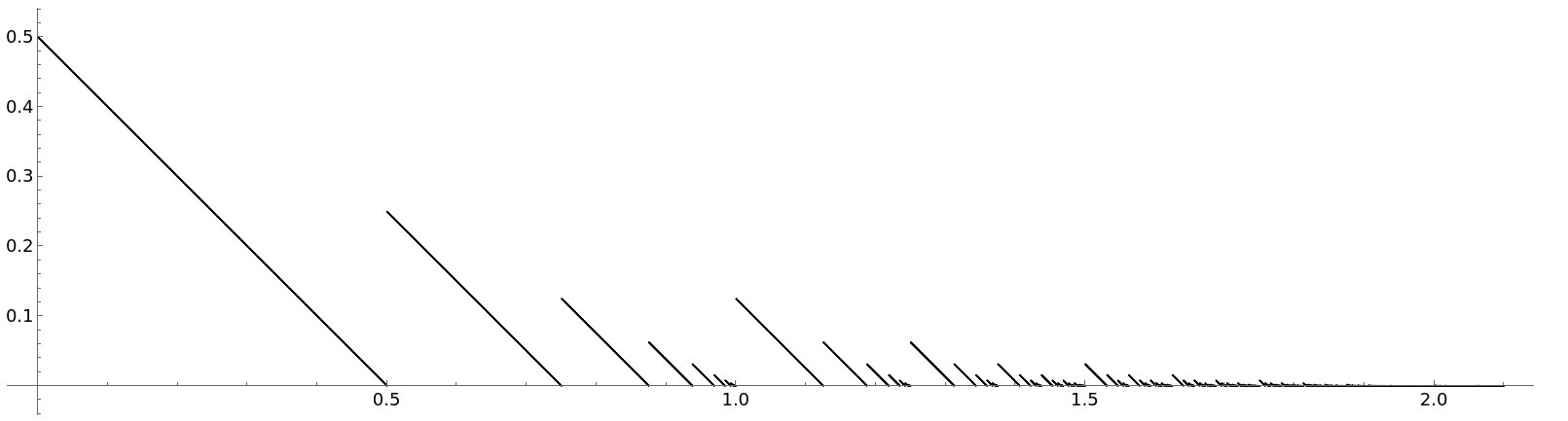}
            \caption{The graph of $M(x)$ up to $x=2.1$.}
            \label{fig:Mgraph}
\end{figure}
In a follow-up paper, Bufetov, Nivasch and Pakhomov \cite{BNP} studied a generalization of fusible numbers to \emph{$n$-fusible numbers} and a corresponding generalization of algorithm $M$ to the following algorithm $M_n$:
\begin{equation}
        M_n(x) = 
        \begin{cases}
			-x, & \text{if  $x < 0$;}\\
            \frac{M_n(x-M_n(x-\cdots -M_n(x-1)\cdots))}{n}, & \text{if  $x \geq 0$;}
		  \end{cases}
        \end{equation}
where $M_n(x-\cdots)$ repeats $n$ times. In particular, algorithm $M$ of (\ref{eq_M}) equals $M_2$. Bufetov et al.~showed that for every $n \geq 1$, $M_n$ terminates on all real inputs.

In this paper we study the following question: Which modifications can be done to algorithm $M$, or to its generalization $M_n$, such that they will still halt on all real inputs? For example, what would happen if, in (\ref{eq_M}), we change the denominator $2$ to $3$? What would happen if we change the ``${}-1$'' to something depending on $x$?

In this paper we identify a large class of algorithms that generalize $M_n$ and halt on all real inputs. The description of the algorithms, as well as the proof  that they halt on all real inputs, involve a property of real-valued functions, which we call \emph{ordinal decreasing}.

\subsection{Ordinal decreasing functions}

\begin{Definition}\label{def_ord_dec}
    Given a function $f:D\to \mathbb{R}$ for some $D \subseteq \mathbb{R}$, we call a descending sequence $x_1>x_2>x_3>\cdots$ in $D$ \emph{$f$-bad} if $f(x_1)>f(x_2)>f(x_3)>\cdots$. We call the function $f$ \emph{ordinal decreasing} if there exist no infinite $f$-bad sequences. We call $f$ \emph{ordinal decreasing up to $a$} if it is ordinal decreasing in the interval $(-\infty, a] \subseteq D$.
\end{Definition}

Some examples of ordinal decreasing functions are: 
\begin{enumerate}
    \item Every nonincreasing function.
    \item A function $f$ in the domain $[0,1)$ that is divided into a sequence of intervals $\{I_n\}$ where $I_n = [1-2^{-n},1-2^{-(n+1)})$ and where $f$ is decreasing in each interval (see Figure \ref{fig:graph2}, left).
    \item A function $f$ in the domain $(0,1]$ that is divided into a sequence of intervals $\{I_n\}$ where $I_n = (2^{-(n+1)},2^{-n}]$, such that $f$ is decreasing in each $I_n$ but such that for each $n$ there exist only finitely many $m>n$ such that $f(2^{-m})<\lim_{x \to (2^{-n})^+}(f(x))$ (see Figure \ref{fig:graph2}, right).
\end{enumerate}
\begin{figure}
    \centering
    \includegraphics[scale=0.3]{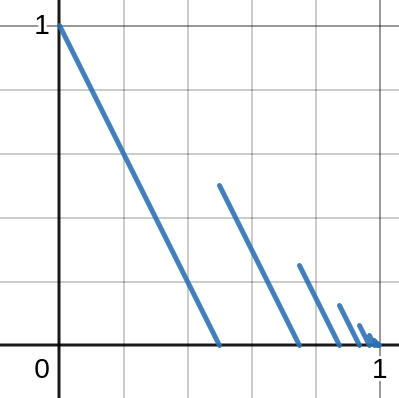}
    \includegraphics[scale=0.3375]{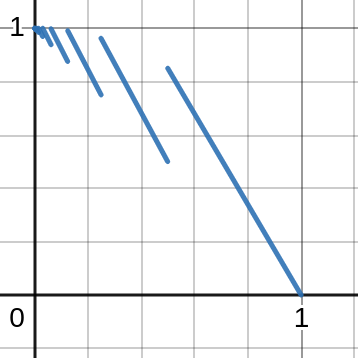}
    \caption{Examples of ordinal decreasing functions.}
    \label{fig:graph2}
\end{figure}

The notion of an $f$-bad sequence is analogous to the notion of a bad sequence in a well partial order (see Section \ref{subsec_WPO} below for background on well partial orders). We can naturally extend the analogy, and define the tree of $f$-bad sequences, as well as the ordinal type $o(f)$ of the function:

\begin{Definition}
    Given an ordinal decreasing function $f$, the \emph{tree of $f$-bad sequences} $T_f$ is the (possibly infinite) tree that contains a vertex $v_f(\overline x)$ for each $f$-bad sequence $\overline x = \langle x_1>\cdots>x_n\rangle$, and contains an edge connecting each $v_f(\langle x_1>\cdots>x_n\rangle)$, $n\ge 1$ to its parent $v_f(\langle x_1>\cdots>x_{n-1}\rangle)$. The \emph{root} of the tree is $v_f(\langle\rangle)$, corresponding to the empty sequence.
\end{Definition}

Since the tree $T_f$ contains no infinite path, there exists a unique way to assign to each vertex $v\in T_f$ an ordinal height $o(v)$, such that $o(v)=\sup_{\text{$w$ child of $v$}} (o(w)+1)$ for all $v\in T_f$.

\begin{Definition}\label{def_ODF_type}
The \emph{ordinal type} $o(f)$ of the ordinal decreasing function $f$ is the ordinal height of the root of $T_f$, meaning $o(f) = o(v_f(\langle\rangle))$. Given an interval $D\subset \mathbb R$, we denote by $o(f|_D)$ the ordinal type of the restriction of $f$ to $D$. If $D=(-\infty_,a]$ then we also write $o(f|_D)$ as $o(f|_a)$.
\end{Definition}

\begin{Definition}
    Let $f$ be an ordinal decreasing function. Then $o_f$ is the function from the reals to the ordinals recursively given by $o_f(x)= \sup_{x'<x, f(x') < f(x)}(o_f(x')+1)$.
\end{Definition}

It is easy to verify that  $o(f)=\sup_{x\in \mathbb{R}}(o_f(x)+1)$.

\subsection{Our results}
    Now we can state the main results of our paper.
    \begin{Theorem}
    \label{M halts}
    Consider the recursive algorithm:
        \begin{equation}
        M(x) = 
            \begin{cases}
                f(x)& \text{if  $x < 0$;}\\
                g_1(-M(x-g_2(-M(x-\cdots-g_k(-M(x-s(x)))\cdots)))) & \text{if  $x \geq 0$;}
            \end{cases}
        \end{equation}
    where the functions $s(x)$, $f(x)$ and $g_i(x)$ for all $i$ are all ordinal decreasing and larger than 0 for every $x$ in the appropriate ranges: $(-\infty, 0)$ for $f,g_i$, and $[0,\infty)$ for $s$.
    
    Then $M(x)$ halts and is ordinal decreasing for all $x \in \mathbb{R}$. 
    \end{Theorem}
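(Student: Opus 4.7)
The plan is to show that the downward-closed set
\[
B=\{a\in\mathbb R : M \text{ is defined, positive, and ordinal decreasing on }(-\infty,a]\}
\]
equals all of $\mathbb R$. Since $f$ is positive and ordinal decreasing, $(-\infty,0)\subseteq B$, so if $B\neq\mathbb R$ I may set $c=\sup B$ and derive a contradiction, splitting on whether $c\in B$.

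Suppose first that $c\notin B$. By downward closure, $M$ is defined, positive, and ordinal decreasing on $(-\infty,c)$. Evaluating $M(c)$, every one of its $k$ recursive sub-calls has argument strictly less than $c$: the innermost is $c-s(c)<c$ since $s(c)>0$, and each subsequent sub-call has argument $c-g_i(-v)$ where the previously produced positive $M$-value $v$ makes $g_i(-v)>0$. All sub-calls terminate by hypothesis, so $M(c)=g_1(-v')>0$ is well-defined. Any $M$-bad sequence starting at $c$ has its tail in $(-\infty,c)$ and so has length bounded by $o(M|_{(-\infty,c)})$; hence $M$ remains ordinal decreasing on $(-\infty,c]$, giving $c\in B$, a contradiction.

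Suppose instead $c\in B$, and fix some $c'>c$. If $M(x_0)$ fails to terminate for some $x_0\in(c,c']$, the call tree (finitely branching with at most $k$ children per node) contains by K\"onig's Lemma an infinite branch $x_0>x_1>x_2>\cdots$, and every $x_i\in(c,c']$ since any sub-call at argument $\le c$ would terminate by hypothesis. Then $\sum_i(x_i-x_{i+1})\le x_0-c<\infty$, so the decrements tend to $0$. By pigeonhole, the sub-call type $j\in\{1,\ldots,k\}$ chosen at level $i$ equals some fixed $j^*$ for infinitely many $i$; pass to that subsequence. When $j^*=k$ the decrements are $s(x_i)\to 0$, and extracting a further subsequence on which $s$ is strictly decreasing yields an $s$-bad sequence, contradicting the ordinal-decreasingness of $s$.

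The main obstacle is the remaining case $j^*<k$, where the decrements equal $g_{j^*+1}(-V_i)$ for intermediate values $V_i$ produced by sibling sub-calls at the same level. We still have $g_{j^*+1}(-V_i)\to 0$, but the arguments $-V_i$ are not known to be decreasing, so ordinal-decreasingness of $g_{j^*+1}$ does not apply directly. I expect to handle this by strengthening the induction to track ordinal-decreasingness of $M$ on larger and larger portions of its domain in parallel with termination, and by extracting monotone subsequences of the $V_i$ via ordinal-decreasingness of $M$ itself before applying ordinal-decreasingness of $g_{j^*+1}$. Iterating this extraction through the $k$ nested layers compounds ordinals via fixed points, which is what produces the Veblen-function bound $\varphi_{k-1}(\gamma+o(s)+1)$ announced in the abstract.
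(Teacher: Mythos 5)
Your skeleton (take $c=\sup B$ and split on whether $c\in B$) is just real induction, which is also the paper's framework, and your case $c\notin B$ is correct --- it is the easy step (the paper's condition (R2)). The genuine gap is in the case $c\in B$, which is where essentially all of the work lies, and it is twofold. First, to contradict $c=\sup B$ you must exhibit some $c'>c$ with $c'\in B$, i.e.\ you must prove both termination \emph{and} ordinal-decreasingness of $M$ on $(-\infty,c']$; your argument only attacks termination, and the ordinal-decreasingness beyond $c$ is nowhere established. Second, even for termination, the case $j^*<k$ that you flag as an ``obstacle'' is not a residual detail but the crux, and your sketch does not resolve it: the decrements are $g_{j^*+1}(-V_i)$ where $V_i$ is the value returned by the sibling sub-call $M_{j^*+1}(x_i)$. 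To contradict ordinal-decreasingness of $g_{j^*+1}$ you would need a subsequence along which $-V_i$ is strictly \emph{decreasing} (i.e.\ $V_i$ strictly increasing) while $g_{j^*+1}(-V_i)$ strictly decreases, and nothing in your setup produces that; the Ramsey-type extraction only gives you the opposite monotonicity, which is not forbidden for an ordinal-decreasing $g_{j^*+1}$. Moreover the sibling arguments $x_i-M_{j^*+2}(x_i)$ are not monotone in $i$ and may lie in $(c,c']$, exactly where ordinal-decreasingness of $M$ is the thing being proved, so ``extracting monotone subsequences of the $V_i$ via ordinal-decreasingness of $M$ itself'' is circular as stated. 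Note also that your argument, if it worked, would prove termination on $(c,c']$ for an \emph{arbitrary} $c'>c$ in one stroke, which is a sign that the purely global compactness/pigeonhole route is missing an ingredient.

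For comparison, the paper closes exactly this step locally rather than globally. Given $y\in S$, it proves by downward induction on $i=k,\ldots,1$ that each partial composition $M_i(x)=g_i(-M(x-M_{i+1}(x)))$ is defined and ordinal decreasing on $(-\infty,y+\varepsilon_i]$ for some $\varepsilon_i>0$: the ``eventual increase'' lemma (if $h$ is positive and ordinal decreasing on $(y,y+\beta)$, then $x-h(x)<y$ for all $x$ in some $(y,y+\varepsilon)$) forces every recursive argument strictly below $y$, where $M$ is already known to be good, and the closure of ordinal-decreasing functions under $x\mapsto f(-g(x))$ and under adding $-x$ (proved via Ramsey) transports ordinal-decreasingness outward through the nested calls. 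This simultaneously controls your problematic sibling values (each $V_i=M_{j^*+1}(x_i)$ is itself an ordinal-decreasing function of $x$ just beyond $y$) and yields the ordinal-decreasingness of $M$ slightly beyond $y$ that your argument also needs. Some such local machinery has to be imported to close your gap; the Veblen-function bookkeeping you invoke at the end belongs to the ordinal bound of Theorem \ref{thm_o_bound} and plays no role in the termination statement itself.
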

    Note that Theorem \ref{M halts} covers the cases mentioned above, by taking $k=n$, $f(x)=-x$, $g_1(x)=-\frac{x}{n}$, $g_i(x)=-x$ for $2 \leq i \leq n$ and $s(x)=1$.

    We also prove the following upper bounds on $o(M)$ in terms of $k$ and $o(f), o(s), o(g_1),\ldots, o(g_k)$.

    \begin{Theorem}\label{thm_o_bound}
        Let $M$ be the function computed by the algorithm of Theorem \ref{M halts}. If $k=1$, then let $\gamma$ satisfy $\max{\{o(f), o(s), o(g_1)\}} < \omega^{\omega^\gamma}$. Then $o(M)\le \omega^{\omega^{\gamma+1}(o(s)+1)}$. For $k\ge 2$, let $\gamma$ satisfy
        $\max\{o(f), o(s), o(g_1), \ldots,\allowbreak o(g_k)\} < \varphi_{k-1}(\gamma)$. Then $o(M)\le\varphi_{k-1}(\gamma+o(s)+1)$.
    \end{Theorem}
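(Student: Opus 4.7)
The plan is to prove the theorem by induction on $k$, with $k=1$ as the base case. For each real $x$, I would define an ordinal $\rho(x)$ that bounds the local height $o_M(x)$ in the tree $T_M$, with $\rho$ defined so as to mirror the recursive structure of $M$. The global bound then follows from $o(M) = \sup_x(o_M(x)+1) \le \sup_x(\rho(x)+1)$. Theorem~\ref{M halts} ensures $M$ is well-defined and ordinal decreasing, so $\rho$ only needs to track the \emph{size} of the bad-sequence tree, not its termination.

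For the base case $k=1$, I would expand the chain of recursive calls $x=x_0 > x_1 > \cdots > x_m$, where $x_{i+1} = x_i - s(x_i)$ until $x_m < 0$. An $M$-bad sequence then produces bad behavior at three layers: the $f$-level at the terminating values $x_m$, the $g_1$-level at each step of the unwinding, and the $s$-level controlling the chain of arguments. Each layer contributes at most an $\omega$-exponential; the extra multiplication by $(o(s)+1)$ in the bound $\omega^{\omega^{\gamma+1}(o(s)+1)}$ reflects that the length of the chain is governed by $o(s)$, so combining bad sequences across chain steps adds a factor of $o(s)+1$ rather than merely incrementing an index.

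For $k\ge 2$, the strategy is that each additional level of nesting corresponds to passing to the next Veblen function. View the innermost block $M(x-s(x))$ passed through $g_k$ as producing a new ``effective displacement'' $g_k(-M(x-s(x)))$ used by the next recursive call. Treating this composite as a new ordinal decreasing function and applying the inductive bound for fewer nested calls, each extra nesting gains exactly one Veblen level, going from $\varphi_{j-1}$ to $\varphi_j$ via the fixed-point identity that $\varphi_j$ enumerates the common fixed points of $\varphi_i$ for $i<j$. Iterating this $k-1$ times, while tracking how $o(s)$ shifts the argument of the outer Veblen function, yields the claimed bound $o(M)\le\varphi_{k-1}(\gamma+o(s)+1)$.

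I expect the main obstacle to be the detailed ordinal bookkeeping: rigorously showing that each nested recursive call contributes exactly one Veblen level, rather than more, and that bad sequences of $M$ genuinely decompose along the nesting structure. Interleavings between different nesting levels could in principle produce worse bounds than the theorem claims; the correct definition of $\rho(x)$ must anticipate such interleavings, and the verification that $\rho$ strictly decreases along $M$-bad sequences must coordinate the positivity and ordinal-decreasing properties of each of $f$, $s$, and the $g_i$ in a single, simultaneous argument.
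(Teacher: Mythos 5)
Your outline gestures at the right shape of the answer (one Veblen level per nesting, an extra $o(s)+1$ factor from the chain of arguments), but as it stands there is a genuine gap, and the central one is the circularity in your induction on $k$. You propose to treat the innermost composite $g_k(-M(x-s(x)))$ as ``a new ordinal decreasing function'' and invoke the bound for $k-1$ nested calls. But that composite is defined in terms of $M$ itself, the very function whose ordinal type you are trying to bound; it does not have a fixed ordinal type known in advance, and in fact its type on larger and larger intervals grows without bound through the whole $\varphi_{k-2}$ hierarchy. So there is no fixed ordinal you can feed into an inductive hypothesis for a shorter nesting. The paper resolves exactly this issue not by induction on $k$ but by a simultaneous transfinite induction along the real line: it partitions $[0,\infty)$ into transfinitely many intervals $I_{\alpha_1,\ldots,\alpha_i}$ (nested to depth $k$, induced by $s$ and by the intermediate functions $M_i$), and bounds $o(M_{k-i+1})$ \emph{restricted to each such interval} in terms of the ordinals $\tau_{\alpha_1,\ldots,\alpha_j}=o(M|_{(-\infty,p_{\alpha_1,\ldots,\alpha_j})})$ already constructed to the left (Lemma \ref{lem_sigma}), which is what breaks the circularity.

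The second gap is quantitative machinery you assert but do not supply. ``Each layer contributes at most an $\omega$-exponential'' and ``the length of the chain is governed by $o(s)$'' are precisely the two facts that need proof. The paper gets the first from a composition lemma (Lemma \ref{lem_o_composition}): a quasi-embedding of $h=f(-g(\cdot))$ into the WPO product of the bad-sequence trees gives $o(h)\le o(g)\otimes o(f)$ in the natural product, combined with the splitting lemma $o(f|_J)\le o(f|_{J_1})+o(f|_{J_2})$; these yield the recurrence $\tau_{\alpha+1}\le\tau_\alpha+\tau_\alpha\otimes o(s)\otimes o(g)$ in the case $k=1$. It gets the second from Lemma \ref{lem_num_intervals}: the number of intervals into which an ordinal decreasing positive function partitions its domain is at most $\omega\cdot(o(f|_D)+1)$, proved via an analysis of ``near-roots.'' Without analogues of these two lemmas, your $\rho(x)$ cannot be defined, the claimed decomposition of $M$-bad sequences along the chain $x_0>x_1>\cdots$ has no ordinal content, and the step from ``iterate the recurrence $\omega$ times'' to ``gain one Veblen level'' (the paper's Lemma \ref{lem_varphi_i}, an induction on $i$ with an inner ordinal induction on $\beta$) is exactly the bookkeeping you defer, not a routine verification.
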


    By comparison, the specific function $M$ of Erickson et al.~\cite{ENX} satisfies $o(M)=\varphi_1(0)=\varepsilon_0$, and the generalization of Bufetov et al.~\cite{BNP} satisfies $o(M)=\varphi_{n-1}(0)$. (See Section \ref{subsec_veblen} below for the definition of the $\varphi$ notation.) 

\section{Background}

\subsection{Fusible numbers}\label{subsec_fusible}
In this section we give background on fusible numbers and their generalizations, and their relation to algorithms $M$ and $M_n$. Strictly speaking, this background is not necessary to understand the results of this paper.

Consider the following riddle: We have available two fuses, each of which
will burn for one hour when lit. How can we use the two fuses to measure 45 minutes? The solution is to light both ends of one fuse simultaneously, and at the same time light
one end of the second fuse. The first fuse will burn out after 30 minutes. At that moment we
light the second end of the second fuse. The second fuse will burn out after an additional 15
minutes, bringing the total time elapsed to 45 minutes.

Based on this puzzle, Erickson~\cite{Eri} defined \emph{fusible numbers} as the set $\ff\subset\Q$ of all times that can be similarly measured by using any number of fuses that burn for unit time. Formally, he defined 
\begin{equation*}
x\fuse y=(x+y+1)/2\qquad\text{for $x,y\in \R$.}
\end{equation*}
If $|y-x|<1$,  then the number $x\fuse y$ (read ``$x$ fuse $y$'') represents the time at which a fuse burns out, if its endpoints are lit at times $x$ and $y$, respectively. Then $\ff$ is defined recursively by $0\in\ff$, and $x\fuse y\in\ff$ whenever $x,y\in\ff$ with $|y-x|<1$.

As Erickson et al.~\cite{ENX} showed, $\ff$ is a well-ordered subset of $\mathbb R$, with ordinal type $o(\ff)=\varepsilon_0$. In order to prove the lower bound on the ordinal type, they considered a subset $\ff'\subset\ff$, which they called \emph{tame fusible numbers}.

The set $\ff'$ is defined from the bottom up by transfinite induction. At each stage of the induction, an initial segment $\mathcal H$ of the current $\ff'$ is marked as ``used''. At every step of the construction process, if $x = \min(\ff'\setminus\mathcal H)$ is the smallest unused element, then $\ff'\cap[0,x+1)$ has already been defined.

We start by letting $\ff'\cap [0,1)=\{1-2^{-n}:n\in\N\}$. All these numbers are fusible, since $0\fuse (1-2^{-n})=1-2^{-(n+1)}$. We also initialize $\mathcal H=\emptyset$; meaning, we have not yet used any element of $\ff'$. 

Now we repeatedly do the following: Let $x =\min (\ff'\setminus\mathcal H)$ be the smallest unused element, and suppose by induction that $\ff'\cap[0,x+1)$ has already been defined. Let $y$ be the smallest element of $\ff'$ that is strictly larger than $x$, and let $m=y-x$. We define $\ff'\cap [x+1,y+1)$ by taking the interval $\ff'\cap I_0$ for $I_0 = [x+1-m,x+1)$ (which has already been defined), and applying to it ``$y\fuse{}$'', obtaining $\ff'\cap I_1$ for $I_1 = [x+1,x+1+m/2)$, and then applying  ``$y\fuse{}$'' to this latter set to obtain $\ff'\cap I_2$ for $I_2 = [x+1+m/2,x+1+3m/4)$, and so on. In general, denoting $I_n = [y+1-m/2^{n-1},y+1-m/2^n)$ for $n\in\N$, each interval $\ff'\cap I_{n+1}$ is a scaled-down copy of $\ff'\cap I_n$ obtained by applying ``$y\fuse{}$''. See Figure~\ref{fig_ENX_tame}. Finally, we add $x$ to $\mathcal H$ and repeat.

\begin{figure}
\centering{\includegraphics{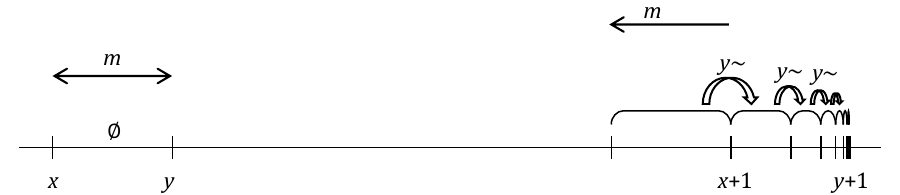}}
\caption{\label{fig_ENX_tame}Construction of the tame fusible numbers in \cite{ENX}, which motivate their algorithm $M$.}
\end{figure}

At the end of the transfinite induction, $\ff'\cap[0,\infty)=\ff'$ is defined; and $\mathcal H = \ff'$, meaning all the elements of $\ff'$ were used.

The authors in \cite{ENX} proved that the ordinal type of $\ff'$ is $o(\ff')=\varepsilon_0$, by first proving the following property by ordinal induction: For every $x\in\ff'$, if $o(\ff'\cap[0,x))=\alpha$, then $o(\ff'\cap[0,x+1))=\omega^\alpha$. Indeed, assume by ordinal induction that $o(\ff'\cap[0,x))=\alpha$ and that $o(\ff'\cap[0,x+1))=\omega^\alpha$. Let $y=\min(\ff'\cap(x,\infty))$ be the successor of $x$ in $\ff'$. On the one hand, $o(\ff'\cap[0,y))=\alpha+1$. On the other hand, $\ff'\cap[x+1,y+1)$ consists of $\omega$-many copies of a suffix of $\ff'\cap[0,x+1)$ (see Figure \ref{fig_ENX_tame} again). This suffix also has ordinal type $\omega^\alpha$. Hence, $o(\ff'\cap[0,y+1))=\omega^\alpha+\omega^\alpha\cdot\omega=\omega^{\alpha+1}$, as claimed.

Hence, the authors in \cite{ENX} concluded, for $n\in\N$ we have $o(\ff'\cap[0,n))= \omega^{\omega^{\cdots^\omega}}$ with $n$ $\omega$'s, and so $o(\ff')=\varepsilon_0$.

\paragraph{\boldmath The algorithm $M$.} The significance of algorithm $M$ for tame fusible number is as follows: For each $x\in \R$, $M(x)$ equals the distance between $x$ and the smallest tame fusible number strictly larger than $x$. Indeed, for $x<0$ we have $M(x)=-x$, and for $x\ge 0$ we recursively have $M(x)=M(x-M(x-1))/2$, as shown in \cite{ENX}. In other words, the set $\{x+M(x):x\in\R\}$ is actually a countable set, which equals $\ff'$. The graph of $M(x)$ consists of line segments with slope $-1$ approaching (without actually touching) the elements of $\ff'$ at the $x$-axis (see Figure \ref{fig:Mgraph} again).

Hence, $M$ is an ordinal decreasing function according to our Definition \ref{def_ord_dec}. Since $o(\ff')=\varepsilon_0$, it follows easily that the ordinal type of $M$ (according to our Definition \ref{def_ODF_type}) is $o(M)=\varepsilon_0$ as well.

\paragraph{Generalized fusible numbers.} Bufetov et al.~\cite{BNP} generalized fusible numbers as follows: Suppose we have a finite collection of water tanks, where each water tank has $n$ faucets. Each faucet, on its own, can empty the water tank in $1$ hour. A faucet can only be opened at time $0$ or at the moment another tank has become completely empty. We are interested in the set of times $t$ for which there is a way to make a tank empty precisely at time $t$ according to these rules.

A tank whose faucets are opened at times $x_1, \ldots, x_n$ will empty at time $g(x_1,\ldots,x_n)=(x_1 + \cdots x_n + 1)/n$. Hence, the set $\mathcal F_n$ of \emph{$n$-fusible numbers} is obtained by letting $0\in\mathcal F_n$, and letting $g(x_1,\ldots,x_n)\in\mathcal F_n$ whenever $x_1,\ldots,x_n\in \mathcal F_n$ and $g(x_1,\ldots,x_n)>\max\{x_1,\ldots,x_n\}$.

Consider the recursive algorithm $M_n$ given by $M_n(x)=-x$ for $x<0$, and $M_n(x)=M_n(x-M_n(x-\cdots-M_n(x-1)\cdots))/n$ for $x\ge 0$, where $M_n(x-\cdots)$ repeats $n$ times. As Bufetov et al.~showed, the set $\{x+M_n(x):x\in\R\}$ is a subset of $\mathcal F_n$, of order type $\varphi_{n-1}(0)$. See \cite{BNP} for more details.
    
\subsection{Real induction}
In this paper we will use the following result, which is called \emph{real induction} (see Clark \cite{L.Clark} for a survey).
\begin{Lemma}
\label{real induction}
Let $S \subset \mathbb{R}$ be a set that satisfies:

(R1) There exists $a \in \mathbb{R} $ such that $(-\infty,a) \subset S $.

(R2) For all $x \in \mathbb{R}$, if $(-\infty, x) \subset S$, then $x \in S$.

(R3) For all $x \in S $, there exists $y > x$ such that $(x, y) \subset S$. 

Then $S = \mathbb{R}$. 
\end{Lemma}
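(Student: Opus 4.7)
The plan is a proof by contradiction using the Dedekind completeness of $\mathbb{R}$. Suppose for contradiction that $S \ne \mathbb{R}$, and let $T = \mathbb{R} \setminus S$, which is by assumption nonempty. Condition (R1) gives $(-\infty, a) \subset S$, so every $t \in T$ satisfies $t \geq a$; in particular, $T$ is bounded below. By the least-upper-bound property of $\mathbb{R}$, the infimum $b = \inf T$ exists, and $b \geq a$.

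The heart of the argument is to show that $b$ triggers all three conditions in turn, each one contradicting the next. First I would verify that $(-\infty, b) \subset S$: any $x < b$ satisfies $x < b \leq t$ for every $t \in T$, so $x \notin T$ and hence $x \in S$. Then (R2), applied at $x = b$, forces $b \in S$. Next, (R3) produces some $y > b$ with $(b, y) \subset S$, and combined with $b \in S$ this yields $[b, y) \cap T = \emptyset$. Finally, invoking the defining property of the infimum with $\varepsilon = y - b > 0$, there must exist $t \in T$ with $b \leq t < y$; since $b \in S$ rules out $t = b$, this $t$ lies in $(b, y) \cap T$, a contradiction.

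I do not anticipate any serious obstacle: this is the classical real-induction argument, structurally parallel to transfinite induction, with (R1) serving as the base case, (R2) as the limit step, and (R3) as the successor step, all glued together by the completeness of $\mathbb{R}$. The only subtlety requiring mild care is to verify that $b \notin T$ (equivalently, $b \in S$) before extracting an element of $T$ strictly greater than $b$; the ordering of the steps above handles this automatically, since (R2) is applied to $b$ before (R3) is used to generate the contradicting $t$.
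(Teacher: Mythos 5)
Your proof is correct and is essentially the paper's own argument, just written out in more detail: both take $b=\inf(\mathbb{R}\setminus S)$, use (R1) to ensure this infimum exists, apply (R2) to conclude $b\in S$, and then derive a contradiction from (R3) via the defining property of the infimum. No issues.
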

\begin{proof}
Suppose $S \neq \mathbb{R}$. Let $a= \inf(\mathbb{R}\setminus S)$. By (R1) $a\neq - \infty$. Therefore by (R2), $a \in S$. Therefore (R3) yields a contradiction.
\end{proof}

It is worth noting for our purposes that since  Peano Arithmetic is built upon the natural numbers, we cannot use real induction within Peano Arithmetic, but must rely on Second Order Arithmetic.

\subsection{Veblen functions}\label{subsec_veblen}

    The finite Veblen functions $\varphi_n$, $n\in\mathbb N$ are a sequence of functions from ordinals to ordinals, defined by starting with $\varphi_0(\alpha) = \omega^\alpha$, and for each $n\in\mathbb N$, letting
    \begin{align*}
    \varphi_{n+1}(0)&=\sup_{k\in\mathbb N}\varphi_n^{(k)}(0);\\
    \varphi_{n+1}(\alpha+1)&=\sup_{k\in\mathbb N}\varphi_n^{(k)}(\varphi_{n+1}(\alpha)+1);\\
    \varphi_{n+1}(\alpha) &=\sup_{\beta<\alpha} \varphi_{n+1}(\beta),\qquad\text{$\alpha$ limit}.
    \end{align*}
    Here $f^{(k)} = \underbrace{f\circ f\circ \cdots\circ f}_k$ denotes $k$-fold application of $f$.

    Each function $\varphi_n$, $n\ge 1$ enumerates the fixed points of $\varphi_{n-1}$. In particular, we have $\varphi_{n-1}(\varphi_n(\alpha))=\varphi_n(\alpha)$; and, more generally, for every $n<m$ we have $\varphi_n(\varphi_m(\alpha))=\varphi_m(\alpha)$.
    
    Ordinals of the form $\varphi_1(\alpha)$ are called \emph{epsilon numbers}, and are denoted $\varepsilon_\alpha = \varphi_1(\alpha)$. They satisfy the following property: Let $\tau$ be an ordinal, and let $\gamma$ be maximal such that $\varepsilon_\gamma\le \tau$. Then the next epsilon number $\varepsilon_{\gamma+1}$ can be obtained by an infinite exponential tower of $\tau$, meaning $\varepsilon_{\gamma+1}=\sup\{\tau, \tau^\tau, \tau^{\tau^\tau},\ldots\}$.

    Furthermore, let $n\in\N$, and suppose $\varphi_{n+1}(\gamma)<\tau<\varphi_{n+1}(\gamma+1)$. Then $\varphi_{n+1}(\gamma+1)$ can be obtained by $\omega$ many applications of $\varphi_n$ on $\tau$, meaning $\varphi_{n+1}(\gamma+1)=\sup\{\tau, \varphi_n(\tau), \varphi_n(\varphi_n(\tau)),\allowbreak\varphi_n(\varphi_n(\varphi_n(\tau))),\ldots\}$.

\subsection{Natural sum and product of ordinals}\label{subsec_natural}
Given ordinals $\alpha,\beta$ with Cantor Normal Forms 
\begin{align*}
\alpha &=\omega^{\alpha_1}+\ldots+\omega^{\alpha_n},\qquad\text{ with $\alpha_1\ge\ldots\ge \alpha_n$},\\
\beta &=\omega^{\beta_1}+\ldots+\omega^{\beta_m},\qquad\text{ with $\beta_1\ge\ldots\ge\beta_m$};
\end{align*}
their \emph{natural sum} $\alpha\oplus \beta$ is given by $\omega^{\gamma_1}+\ldots+\omega^{\gamma_{n+m}}$, where $\gamma_1,\ldots,\gamma_{n+m}$ are $\alpha_1,\ldots,\alpha_n,\beta_1,\ldots,\beta_m$ sorted in nonincreasing order. The \emph{natural product} of $\alpha,\beta$ is given by
\begin{equation*}
\alpha\otimes \beta=\mathop{\bigoplus\limits_{1\le i\le n}}\limits_{1\le j\le m}\omega^{\alpha_i\oplus \beta_j}.
\end{equation*}
(See e.g.~de Jongh and Parikh \cite{dejongh_parikh}.)

The natural sum and natural product operations are commutative and associative, and natural product distributes over natural sum. These operations are also monotonic, in the sense that if $\alpha<\beta$ then $\alpha\oplus\gamma<\beta\oplus\gamma$, if $\alpha\le\beta$ then $\alpha\otimes\gamma\le\beta\otimes\gamma$, and if $\alpha<\beta$ and $\gamma>0$ then $\alpha\otimes\gamma<\beta\otimes\gamma$. Furthermore, $\alpha+\beta\le\alpha\oplus\beta$ and $\alpha\beta\le\alpha\otimes\beta$.

Recall that if $\alpha=\omega^{\alpha_1}+\cdots+\omega^{\alpha_k}$ is in CNF, then $\alpha\omega = \sup_{n\in\mathbb N} \alpha n = \omega^{\alpha_1+1}$, and $\alpha^\omega = \sup_{n\in\mathbb N} \alpha^n= \omega^{\alpha_1\omega}$. Then the following properties are readily checked:
\begin{itemize}
    \item $\underbrace{\alpha\oplus\cdots\oplus\alpha}_n = \alpha\otimes n,\qquad n\in\mathbb N$;
    \item $\sup_{n\in\mathbb N}\alpha\oplus n = \alpha+\omega$ (not $\alpha\oplus\omega$!);
    \item if $\alpha$ and $\beta$ are limit ordinals, then $\alpha\oplus\beta = \sup_{\alpha'<\alpha,\beta'<\beta} (\alpha'\oplus\beta')$;
    \item $\sup_{n\in\mathbb N}\alpha\otimes n = \alpha\omega$ (not $\alpha\otimes\omega$!);
    \item if both $\alpha<\omega^\gamma$ and $\beta<\omega^\gamma$ then $\alpha\oplus\beta<\omega^\gamma$;
    \item if both $\alpha<\omega^{\omega^\gamma}$ and $\beta<\omega^{\omega^\gamma}$ then $\alpha\otimes\beta<\omega^{\omega^\gamma}$.
\end{itemize}

Define the \emph{repeated natural product} by transfinite induction, by letting $\alpha^{[0]} = 1$, $\alpha^{[\beta+1]}=\alpha^{[\beta]}\otimes\alpha$, and $\alpha^{[\beta]}=\sup_{\gamma<\beta}\alpha^{[\gamma]}$ for limit $\beta$. It can be checked that
\begin{equation*}
    \alpha^{[\omega]} = \sup_{n\in\mathbb N} \alpha^{[n]} = \sup_{n\in\mathbb N} (\alpha\otimes\cdots\otimes\alpha) = \alpha^\omega.
\end{equation*}
In general, for limit $\beta$ we have $\alpha^{[\beta]} = \alpha^\beta$, as can be shown by ordinal induction on $\beta$. It can also be shown by ordinal induction on $\beta$ that $(\omega^{\omega^\alpha})^{[\beta]} = (\omega^{\omega^\alpha})^\beta$. (See also Altman \cite{altman}.)

\subsection{Well partial orders}\label{subsec_WPO}

Given a set $A$ partially ordered by $\preceq$, a \emph{bad sequence} is a sequence $a_1,a_2,a_3\ldots$ of elements of $A$ such that there exist no indices $i<j$ for which $a_i\preceq a_j$. Then $\preceq$ is said to be a \emph{well partial order} (WPO) if there exist no infinite bad sequences of elements of $A$. The \emph{ordinal type} of $A$, denoted $o(A)$, is the ordinal height of the root of the tree of bad sequences of $A$. It also equals the maximal order type of a linear order $\le$ extending $\preceq$ (Blass and Gurevich \cite{blass_gurevich}, see also de Jongh and Parikh \cite{dejongh_parikh}).

Given WPOs $A$ and $B$, their disjoint union $A\sqcup B$ can be well partially ordered by letting $x\preceq y$ if and only if $x,y\in A$ and $x\preceq_A y$ or $x,y\in B$ and $x\preceq_B y$. Then $o(A\sqcup B) = o(A)\oplus o(B)$ \cite{dejongh_parikh}. Also, their Cartesian product $A\times B$ can be well partially ordered by letting $(a,b)\preceq(a',b')$ if and only if $a\preceq_A a'$ and $b\preceq_B b'$. Then $o(A\times B) = o(A)\otimes o(B)$ \cite{dejongh_parikh}.
    
    \section{Proof of Theorem \ref{M halts}}
    
    We start by proving some properties of ordinal decreasing functions.
    \begin{Lemma}
    \label{perfect subsequence}
        Suppose $f:D\to\mathbb R$ is ordinal decreasing. Then for every infinite decreasing sequence $\{x_n\}$ in $D$ there is an infinite subsequence $\{x'_{n}\}$ for which $\{f(x'_{n})\}$ is nondecreasing.
    \end{Lemma}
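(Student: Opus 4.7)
The plan is to use a standard Ramsey/Erdős--Szekeres-style dichotomy on the real sequence $y_n := f(x_n)$. Call an index $n$ a \emph{peak} if $y_n > y_m$ for every $m > n$. I split into two cases according to whether the set of peaks is finite or infinite.

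If there are infinitely many peaks $n_1 < n_2 < \cdots$, then by definition of peak, $y_{n_1} > y_{n_2} > y_{n_3} > \cdots$. Since $\{x_n\}$ is strictly decreasing we also have $x_{n_1} > x_{n_2} > \cdots$, so $\{x_{n_k}\}$ is an infinite $f$-bad sequence, contradicting the assumption that $f$ is ordinal decreasing. Hence this case is impossible.

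If there are only finitely many peaks, pick an index $N$ strictly greater than all peak indices. Then every index $n \ge N$ is a non-peak, meaning there exists $m > n$ with $y_m \ge y_n$. Starting from $n_0 := N$, I iteratively pick $n_{k+1} > n_k$ with $y_{n_{k+1}} \ge y_{n_k}$, which is possible because $n_k \ge N$ is not a peak. This produces a subsequence $\{x_{n_k}\}$ (still strictly decreasing in $x$ since it is a subsequence of $\{x_n\}$) whose $f$-values are nondecreasing, as required.

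There is no real obstacle here: the only subtle point is checking that the definition of peak with strict inequality gives the right dichotomy, so that the ``infinitely many peaks'' case produces a strictly decreasing (not merely nonincreasing) sequence of $f$-values, which is exactly what is forbidden by the ordinal-decreasing hypothesis. Everything else is bookkeeping on subsequences.
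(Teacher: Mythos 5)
Your proof is correct, but it takes a different route from the paper. The paper invokes the infinite Ramsey theorem: it builds a complete graph on the terms $x_n$, colors the pair $\{x_i,x_j\}$ (with $i<j$) red when $f(x_i)>f(x_j)$ and green otherwise, rules out an infinite red clique because that would be an infinite $f$-bad sequence, and extracts the nondecreasing subsequence from the resulting infinite green clique. You instead use the classical peak (Erd\H{o}s--Szekeres / Bolzano--Weierstrass) dichotomy on the real sequence $y_n=f(x_n)$: infinitely many peaks would yield strictly decreasing $f$-values along a strictly decreasing subsequence of $\{x_n\}$, contradicting the ordinal-decreasing hypothesis, while finitely many peaks let you greedily build a nondecreasing subsequence. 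Your handling of the strict-versus-nonstrict inequalities is the right care to take, since the $f$-bad condition forbids exactly strict descent; the greedy construction in the second case is sound because each chosen index lies beyond the last peak. What each approach buys: your argument is more elementary, avoiding Ramsey's theorem entirely and using only the monotone-subsequence dichotomy for real sequences, whereas the paper's Ramsey formulation is arguably more uniform (the same two-coloring template also underlies the paper's later quasi-embedding arguments) and generalizes mechanically to partition statements with more colors or more coordinates. Either proof is complete and acceptable here.
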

    \begin{proof}
        By the infinite Ramsey's theorem \cite{ramsey}. Let an infinite decreasing sequence $\{x_n\}$ be given. Define an infinite complete graph in which there is a vertex for each $x_{i}$ and color each edge $\{x_{i},x_{j}\},i<j$ red if $f(x_{i})>f(x_{j})$ and green otherwise.
        
        An $f$-bad sequence corresponds to a monochromatic red complete subgraph. Hence, since $f(x)$ is ordinal decreasing in $D$, our graph cannot contain a monochromatic red infinite complete subgraph. Therefore, by the infinite Ramsey's theorem, our graph must contain a monochromatic green infinite subgraph, and thus the original sequence contains an infinite nondecreasing subsequence (comprised of all the vertices in the subgraph).
    \end{proof}
    \begin{Lemma}
    \label{closed to composition}
        Suppose $g(x)$ is ordinal decreasing in $D$ and  $f(x)$ is ordinal decreasing up to $ \sup_{x\in D}(-g(x))$. Then $f(-g(x))$ is ordinal decreasing in $D$.
    \end{Lemma}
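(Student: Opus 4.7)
The plan is to argue by contradiction: assume $h(x) := f(-g(x))$ has an infinite bad sequence $x_1>x_2>x_3>\cdots$ in $D$ with $h(x_1)>h(x_2)>\cdots$, and derive a contradiction with either the ordinal decreasingness of $g$ on $D$ or that of $f$ on $(-\infty,\sup_{x\in D}(-g(x))]$.

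First, I would apply Lemma \ref{perfect subsequence} to the function $g$ and the descending sequence $\{x_n\}$ to extract a subsequence $\{x'_n\}$ along which $\{g(x'_n)\}$ is nondecreasing, i.e., $\{-g(x'_n)\}$ is nonincreasing. Note that the values $f(-g(x'_n))$ still form a strictly decreasing subsequence of $\{h(x_n)\}$, so no information is lost on the $f$ side.

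Next, I would perform a dichotomy on the nonincreasing real sequence $y_n := -g(x'_n)$. Either (i) $\{y_n\}$ takes only finitely many distinct values, in which case (since a nonincreasing sequence that repeats a value must be constant between repetitions) it is eventually constant, contradicting the fact that $f(y_n)$ is strictly decreasing; or (ii) $\{y_n\}$ takes infinitely many distinct values, in which case one can extract a strictly decreasing subsequence $y_{n_1}>y_{n_2}>\cdots$. Since the corresponding values $f(y_{n_k})$ are still strictly decreasing and all $y_{n_k}$ lie in the range $\{-g(x):x\in D\}\subseteq(-\infty,\sup_{x\in D}(-g(x))]$, this would be an infinite $f$-bad sequence in the region where $f$ is assumed to be ordinal decreasing, yielding the desired contradiction.

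The main obstacle is essentially conceptual rather than technical: one has to convert the original bad sequence for the composition into a bad sequence for $f$ alone, and the only leverage available is that $g$ forbids the inner argument $-g(x_n)$ from strictly decreasing along a subsequence. Lemma \ref{perfect subsequence} is precisely the tool that extracts this monotone behavior, and after that the dichotomy into the ``eventually constant'' and ``strictly decreasing subsequence'' cases handles both possible obstructions cleanly. A minor subtlety is making sure the constructed $f$-bad sequence really lies in the domain $(-\infty,\sup_{x\in D}(-g(x))]$ on which $f$ is assumed ordinal decreasing, which is immediate since every $y_{n_k}$ is of the form $-g(x)$ for some $x\in D$.
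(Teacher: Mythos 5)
Your proof is correct and follows essentially the same route as the paper: apply Lemma \ref{perfect subsequence} to $g$ to get a nondecreasing subsequence of $g$-values, then split into the case of a repeated value (forcing equal values of $f(-g(\cdot))$) and the case of a strictly decreasing sequence of $-g$-values handled by the ordinal decreasingness of $f$ up to $\sup_{x\in D}(-g(x))$. Your contradiction framing and the slightly more explicit dichotomy are only cosmetic differences.
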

    \begin{proof}
        Consider an infinite decreasing sequence $\{x_{n}\}$ in $D$. By Lemma \ref{perfect subsequence}, there exists an infinite nondecreasing subsequence of $\{g(x_{n})\}$. If this subsequence is not strictly increasing, there exists  $i<j$ such that $g(x_i)=g(x_j)$, and so is $f(-g(x_i))=f(-g(x_j))$. Otherwise we have a strictly decreasing subsequence of $\{-g(x_n)\}$, and since $f(x)$ is ordinal decreasing up to $\sup_{x\in D}(-g(x))$, we can find $f(-g(x_j))\geq f(-g(x_i))$ with $i<j$, and we are done.
    \end{proof}
    \begin{Lemma}
        \label{closed to addition}
        Suppose $f(x)$ and $g(x)$ are ordinal decreasing in $D$. Then $f(x)+g(x)$ is ordinal decreasing in $D$.
    \end{Lemma}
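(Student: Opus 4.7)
The plan is to argue by contradiction using Lemma \ref{perfect subsequence} twice. Suppose $f+g$ is not ordinal decreasing in $D$, so there is an infinite decreasing sequence $x_1 > x_2 > x_3 > \cdots$ in $D$ with
\[
(f+g)(x_1) > (f+g)(x_2) > (f+g)(x_3) > \cdots.
\]

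First I would apply Lemma \ref{perfect subsequence} to the ordinal decreasing function $f$ and the sequence $\{x_n\}$, obtaining an infinite subsequence $\{x'_n\}$ along which $\{f(x'_n)\}$ is nondecreasing. Then I would apply Lemma \ref{perfect subsequence} a second time, this time to $g$ and the subsequence $\{x'_n\}$, extracting a further subsequence $\{x''_n\}$ along which $\{g(x''_n)\}$ is nondecreasing. Note that $\{x''_n\}$ is still a strictly decreasing sequence in $D$, and $\{f(x''_n)\}$ remains nondecreasing since it is a subsequence of a nondecreasing sequence.

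At this point both $\{f(x''_n)\}$ and $\{g(x''_n)\}$ are nondecreasing, hence so is their termwise sum $\{(f+g)(x''_n)\}$. But $\{(f+g)(x''_n)\}$ is a subsequence of the strictly decreasing sequence $\{(f+g)(x_n)\}$, so it must also be strictly decreasing. This contradiction completes the proof.

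There is no real obstacle here: the lemma is a direct consequence of iterating Lemma \ref{perfect subsequence} to extract a simultaneously nondecreasing subsequence for both $f$ and $g$, after which additivity of $+$ with respect to monotonicity yields the contradiction. The same idea would generalize to any finite sum of ordinal decreasing functions by iterating the extraction one more time per summand.
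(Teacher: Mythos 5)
Your proof is correct and follows essentially the same route as the paper: extract a subsequence on which $f$ is nondecreasing via Lemma \ref{perfect subsequence}, then handle $g$ and conclude that the sum cannot be strictly decreasing. The only cosmetic difference is that you apply Lemma \ref{perfect subsequence} a second time to $g$, whereas the paper simply invokes the definition of ordinal decreasing on the subsequence to get a single pair $i<j$ with $g(x'_j)\ge g(x'_i)$, which already suffices.
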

    \begin{proof}
        Consider an infinite decreasing sequence $\{x_n\}$ in $D$. By Lemma \ref{perfect subsequence} on $f$, we can find an infinite subsequence $\{x'_n\}$ such that $\{f(x'_n)\}$ is nondecreasing. Since $g$ is ordinal decreasing in $D$, we can find in the subsequence $\{x'_n\}$ a pair of indices $i<j$ such that $g(x'_j) \geq g(x'_i)$. Hence, for these indices we have both $f(x'_j)\ge f(x'_i)$ and $g(x'_j)\ge g(x'_i)$, which implies $f(x'_j)+g(x'_j) \geq f(x'_i)+g(x'_i)$. Thus, $f(x)+g(x)$ is ordinal decreasing in $D$, as desired.
    \end{proof}
    (In this paper we only use Lemma \ref{closed to addition} for the special case $g(x)=-x$.)
    \begin{Lemma}
    \label{eventual increase}
        Let $\beta>0$, and suppose $f(x)$ is ordinal decreasing in $D=(y,y+\beta)$ and larger than $0$. Then there exists an $\varepsilon\in(0,\beta)$ such that for every $x\in(y,y+\varepsilon)$ we have $x-f(x)<y$.
    \end{Lemma}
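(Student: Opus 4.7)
\medskip

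\noindent\textbf{Proof plan.} The statement says that near the left endpoint $y$ of the interval, $f$ cannot dip too close to $0$ too often; specifically, for $x$ slightly larger than $y$, the positivity of $f$ must dominate the small quantity $x-y$. I would argue by contradiction: if no such $\varepsilon$ exists, then for every $n\in\mathbb N$ we can find some $x_n\in(y,\,y+\min(\beta,1/n))$ with $x_n - f(x_n)\ge y$, i.e.\ $0 < f(x_n) \le x_n - y < 1/n$. So we obtain a sequence $\{x_n\}\subset D$ with $x_n\to y^+$ and $f(x_n)\to 0^+$ (strictly positive since $f>0$ everywhere on $D$).

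The goal is now to mine this sequence for an infinite $f$-bad sequence, contradicting the assumption that $f$ is ordinal decreasing. I would do this by a direct inductive extraction rather than invoking Lemma~\ref{perfect subsequence} (which gives nondecreasing $f$-subsequences, the wrong direction here). Pick any $x_{n_1}$. Having chosen $x_{n_1}>\cdots>x_{n_k}$ with $f(x_{n_1})>\cdots>f(x_{n_k})>0$, note that the conditions ``$x_m < x_{n_k}$'' and ``$f(x_m)<f(x_{n_k})$'' each hold for all sufficiently large $m$: the first because $x_m\to y$ while $x_{n_k}>y$, and the second because $f(x_m)\to 0$ while $f(x_{n_k})>0$. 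Hence both hold simultaneously for some $m=n_{k+1}$, and we may continue. The resulting subsequence $\{x_{n_k}\}$ is both strictly $x$-decreasing and strictly $f$-decreasing, i.e.\ an infinite $f$-bad sequence, contradicting the hypothesis that $f$ is ordinal decreasing on $D$.

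There is no serious obstacle; the only care needed is to confirm that $f(x_n)$ is genuinely bounded away from $0$ at each fixed stage (which uses $f>0$ pointwise), so that the ``sufficiently large $m$'' argument goes through. The rest is routine bookkeeping.
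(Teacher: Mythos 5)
Your proof is correct and takes essentially the same route as the paper: argue by contradiction, produce counterexamples $x_n\to y^{+}$ with $0<f(x_n)\le x_n-y$, and extract from them an infinite $f$-bad sequence contradicting ordinal decreasingness. Your explicit inductive extraction of a subsequence that is simultaneously strictly decreasing in $x$ and in $f$ merely spells out a step the paper's proof leaves implicit.
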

    \begin{proof}
        Suppose for a contradiction that for every $\varepsilon\in(0,\beta)$ we have a counterexample $x\in(y,y+\varepsilon)$ with $ x-f(x) \geq y $. Then, we have an infinite decreasing sequence $\{x_n\}$ of such counterexamples with $\lim(x_n)=y$, but because $0<f(x_n)\leq x_n-y$ there exists an infinite subsequence $\{x'_n\}$ for which $f(x'_n)$ is also decreasing. This contradicts the assumption that $f(x)$ is ordinal decreasing in $D$.
    \end{proof}
    We are ready to prove our main result.
    \begin{proof}[Proof of Theorem \ref{M halts}]
        Consider the recursive algorithm
        \begin{equation}
        M(x) = 
            \begin{cases}
                f(x), & \text{if  $x < 0$}\\
                g_1(-M(x-g_2(-M(x-\cdots-g_k(-M(x-s(x)))\cdots)))) & \text{if  $x \geq 0$}
            \end{cases}
        \end{equation}
    where $s(x)$, $f(x)$ and $g_i(x)$ for all $i$ are all ordinal decreasing and larger than 0 for every $x$ in the appropriate ranges: $(-\infty, 0)$ for $f,g_i$, and $[0,\infty)$ for $s$.
    
    We claim that $M(x)$ halts and is ordinal decreasing for every $x$. We will prove this by real induction (Lemma \ref{real induction}).

    Let 
    \begin{equation*}
        S=\{x\mid \text{$M$ is defined and ordinal decreasing up to $x$} \}.
    \end{equation*}
Since for $x<0$ $M(x)$ is defined by $M(x)=f(x)$, we have $(-\infty ,0)\subseteq S$. Hence, S satisfies property (R1).

Next, suppose that $(-\infty,y) \subseteq S$. Then note that $M(y)$ is defined, since for every $i$ $M(y-g_i(-M(y-\cdots-g_k(-M(y-s(y)))\cdots)))$ is defined by induction, since functions $g_i$ and $s$ have output larger than $0$. Hence, $M(x)$ is ordinal decreasing up to $y$, including $y$ itself. Hence $y \in S$ as well, so $S$ satisfies property (R2).

Finally suppose $y \in S$. We will show that $(y,y+\varepsilon)\subseteq S$ for some $\varepsilon>0$, meaning $S$ satisfies property (R3). If $y<0$ then the existence of such an $\varepsilon$ follows immediately, so suppose $y\ge 0$.

In order to show the existence of the desired $\varepsilon$, we will show by induction on $i=k,\ldots, 1$ that $M_i(x)=g_i(-M(x-\cdots-g_k(-M(x-s(x)))\cdots))$ is defined and ordinal decreasing up to $y+\varepsilon_i$ for some $\varepsilon_i>0$.
Let us start with the base case $i=k$. In this case $M_k(x)= g_k(-M(x-s(x)))$. By Lemma \ref{closed to addition}, $-x+s(x)$ is an ordinal decreasing function in $[0,\infty)$. By Lemma \ref{eventual increase} on $s(x)$, there is an $\varepsilon_k$ such that $x-s(x)<y$ for every $y<x<y+\varepsilon_k$. Hence, by assumption and Lemma \ref{closed to composition}, $M(-(-x+s(x)))=M(x-s(x))$ is ordinal decreasing and defined  up to $y+\varepsilon_k$ and so is $g_k(-M(x-s(x)))$, as desired.
For the induction step, suppose $M_i(x)$ is ordinal decreasing and defined up to $y+\varepsilon_i$. 
By Lemma \ref{closed to addition}, $-x+M_i(x)$ is ordinal decreasing and defined up to $y+\varepsilon_i$. Furthermore, by Lemma \ref{eventual increase} there exists some $\varepsilon_{i-1}$ such that $x-M_i(x)<y$ for every $y<x<y+\varepsilon_{i-1}$. Hence, by assumption and Lemma \ref{closed to composition} $M(x-M_i(x))$ is defined and ordinal decreasing up to $y+\varepsilon_{i-1}$. Hence, so is $M_{i-1}=g_{i-1}(-M(x-M_i(x)))$. Therefore, $S$ satisfies property (R3) as well. Thus, by Lemma \ref{real induction}, we have $S=\mathbb{R}$.
    \end{proof}
    
\section{Proof of Theorem \ref{thm_o_bound}}

\begin{figure}
    \centering
            \includegraphics{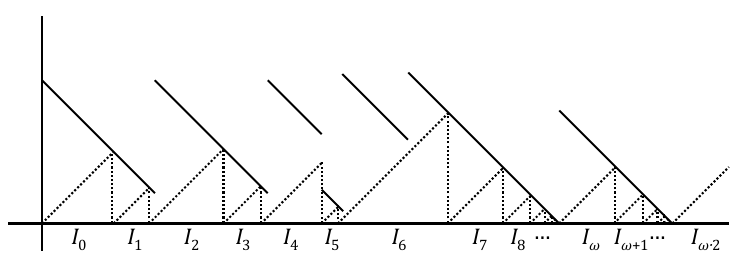}
            \caption{An ordinal decreasing function (solid line) induces a partition of the $x$-axis into a transfinite number of intervals (dotted lines).}
            \label{fig_make_intervals}
\end{figure}

Let $D=[x_{\mathrm{lo}},x_{\mathrm{hi}})\subset\mathbb R$ be an interval, where $x_{\mathrm{lo}}\neq -\infty$ but $x_{\mathrm{hi}}$ could be $\infty$. Let $f:D\to\mathbb R$ be an ordinal decreasing function that is positive for all $x\in D$. Recall that Lemma \ref{eventual increase} implies that for every $p\in D$ there exists $y\in(p,x_{\mathrm{hi}}]$ such that for all $x\in(p,y)$ we have $x-p<f(x)$. Hence, the function $f$ induces a partition of $D$ into maximal intervals as follows. Define the endpoints $p_\alpha$ by
\begin{align*}
    p_0&=x_{\mathrm{lo}};\\
    p_{\alpha+1}&=\max{\{y\le x_{\mathrm{hi}}: x-p_\alpha<f(x) \text{ for all $x<y$}\}};\\
    p_\alpha &= \lim_{\beta<\alpha} p_\beta,\qquad\text{$\alpha$ limit.}
\end{align*}
Then define the intervals $I_\alpha = [p_\alpha, p_{\alpha+1})$ for ordinals $\alpha$. These intervals form a partition of $D$. Figure \ref{fig_make_intervals} shows how the intervals $I_\alpha$ can be computed graphically: Starting at $x_{\mathrm{lo}}$ on the $x$-axis, we move up-right in a straight line with slope $1$, until we encounter the graph of $f$ or pass above the graph. At that point, we descend to the $x$-axis, mark a new endpoint $p_\alpha$, and start this process again.

\begin{Lemma}\label{lem_num_intervals}
    The ordinal number of intervals $I_\alpha$ into which $D$ is partitioned is at most $\omega\cdot(o(f|_D)+1)$.
\end{Lemma}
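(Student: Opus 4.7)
The plan is to abbreviate $q_\beta := p_{\omega\beta}$ and prove by transfinite induction on $\beta$ the following key claim: \emph{whenever $q_{\beta+1}$ is defined (i.e.\ the partition has strictly more than $\omega\beta$ intervals), every $y \in (q_\beta, q_{\beta+1}) \cap D$ satisfies $o_f(y) \ge \beta$.} Granted this claim, write the total number of intervals as $\omega\delta + k$ with $k < \omega$; then for every $\beta < \delta$ the point $q_{\beta+1}$ is defined and the claim produces $y \in D$ with $o_f(y) \ge \beta$, forcing $o(f|_D) > \beta$. Taking suprema yields $\delta \le o(f|_D)$, so $\omega\delta + k < \omega\delta + \omega \le \omega\cdot(o(f|_D)+1)$, which is the bound in the lemma.

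The engine of the induction is the following observation about limit endpoints. Since $q_{\beta+1} = \lim_n p_{\omega\beta + n}$, the interval widths $|I_{\omega\beta + n}| = p_{\omega\beta + n + 1} - p_{\omega\beta + n}$ telescope to $q_{\beta+1} - q_\beta$; when this quantity is finite (the case that drives the induction), the widths tend to $0$. At each endpoint $p_{\omega\beta + n + 1}$, the maximality clause in the definition of $p_{\alpha+1}$ yields, for arbitrarily small $\varepsilon > 0$, a point $x_n \in D$ with $p_{\omega\beta + n + 1} \le x_n < p_{\omega\beta + n + 1} + \varepsilon$ and $f(x_n) \le x_n - p_{\omega\beta + n}$. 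Choosing $\varepsilon$ small enough places $x_n$ in $(q_\beta, q_{\beta+1})$, and the vanishing widths force $f(x_n) \to 0$.

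The inductive step is then immediate. In the successor case $\beta = \beta' + 1$, apply the witness construction at the limit $q_\beta$: the resulting $x_n$ lie in $(q_{\beta'}, q_{\beta'+1})$ with $x_n < y$ and $f(x_n) < f(y)$ for $n$ large, so the inductive hypothesis gives $o_f(x_n) \ge \beta'$ and hence $o_f(y) \ge \beta' + 1 = \beta$. In the limit case, for each $\beta' < \beta$ instead run the construction at $q_{\beta'+1}$ to obtain a point $x \in (q_{\beta'}, q_{\beta'+1})$ with $f(x) < f(y)$ and $o_f(x) \ge \beta'$ (by the inductive hypothesis); this yields $o_f(y) > \beta'$ for every $\beta' < \beta$, hence $o_f(y) \ge \beta$.

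The main obstacle is establishing that the witness values $f(x_n)$ genuinely shrink to $0$: the maximality of $p_{\alpha+1}$ guarantees only the bound $f(x) \le x - p_\alpha$, and the convergence $f(x_n) \to 0$ has to be extracted from the summability of the interval widths within $(q_\beta, q_{\beta+1})$. Once this shrinkage is in hand, everything else is a direct transfinite induction.
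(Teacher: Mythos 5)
Your argument is correct, and it takes a genuinely different route from the paper's. The paper proves the lemma by introducing the set $L_f$ of \emph{near-roots} of $f$ (points approached from the left by points where $f$ tends to $0$), arguing that $L_f$ is well ordered with order type at most $o(f|_D)$, and showing that a block of $\omega$ consecutive intervals $I_\alpha$ accumulates precisely at the non-limit near-roots; counting these blocks yields $\omega\cdot(o(f|_D)+1)$. You instead prove a rank lower bound directly: by transfinite induction, every $y\in(q_\beta,q_{\beta+1})$ has $o_f(y)\ge\beta$, the induction being fed by witnesses $x_n\in[p_{\omega\beta+n+1},\,p_{\omega\beta+n+1}+\varepsilon)$ with $f(x_n)\le x_n-p_{\omega\beta+n}$, supplied by the maximality clause in the definition of the endpoints, whose $f$-values tend to $0$ because the interval widths inside $(q_\beta,q_{\beta+1})$ are summable; comparing these witnesses with $f(y)>0$ drives the successor and limit steps. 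Both proofs exploit the same analytic fact---$\omega$ consecutive intervals can only accumulate where $f$ becomes arbitrarily small from the left---but your bookkeeping replaces the near-root set and the claim $o(L_f)\le o(f)$ by a direct estimate on the rank function $o_f$, which makes the final count ($\omega\delta+k<\omega\cdot(o(f|_D)+1)$) completely explicit. One point you leave implicit does go through: the finiteness of $q_{\beta+1}-q_\beta$ needed for the shrinkage is automatic whenever the inductive claim being proved is non-vacuous, since then $q_{\beta+1}$ is bounded above by a real point of $D$ while $q_\beta\ge x_1>-\infty$; if the gap were infinite, the partition would already have terminated and the later cases would be vacuous, so no case of the induction is lost.
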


\begin{proof}
Recall that $f$ is positive for all $x\in D$. Call $x\in D$ a \emph{near-root} of $f$ if there exists an infinite increasing sequence $y_1,y_2,y_3,\ldots\in D$ such that $\lim_{n\to\infty} y_n = x$ and $\lim_{n\to\infty} f(y_n) = 0$. Let $L_f\subset \mathbb R$ be the set of near-roots of $f$. The set $L_f$ is well-ordered in $\mathbb R$, since from an infinite decreasing sequence of near-roots $x_1>x_2>x_3>\cdots$ we could construct an infinite $f$-bad sequence $z_1>z_2>z_3>\cdots$ by letting $z_1=x_1$, and for each $i=2,3,\ldots$ choosing $z_i\in(x_i,x_{i-1})$ such that $f(z_i)<f(z_{i-1})$. Hence, the set $L_f$ has an ordinal type $o(L_f)$.

Furthermore, the set $L_f$ is closed: If $x_1<x_2<x_3<\cdots$ are near-roots of $f$ that converge to $x$, then $x$ is a near-root of $f$ as well. Hence, the elements of $L_f$ can be classified into \emph{limit} and \emph{non-limit} near-roots. 

Let $\alpha\mapsto x_\alpha$, for $\alpha<o(L_f)$, be the order-preserving enumeration function of $L_f$. Note that $\alpha$ is a limit ordinal if and only if $x_\alpha$ is a limit near-root of $f$.

We will now show that $o(f|_D)\ge o(L_f)$.
For this, we prove by ordinal induction on $\alpha$ that the following is true for every $\alpha<o(L_f)$: For every $\beta<\alpha$ and every $\varepsilon>0$ there exists $y$ such that:
\begin{itemize}
    \item $x_\alpha-\varepsilon<y<x_\alpha$,
    \item $f(y)<\varepsilon$, and
    \item $o_f(y)\ge \beta$.
\end{itemize}
Indeed, given $\alpha$, given $\beta<\alpha$, and given $\varepsilon>0$, we can choose $y$ close enough to $x_\alpha$ such that $y\ge x_\beta$, $y>x_\alpha-\varepsilon$, and $f(y)<\varepsilon$. Then, applying the induction assumption on $\beta$, for every $\gamma<\beta$ we can find $z<x_\beta$ such that $f(z)<f(y)$ and $o_f(z)\ge \gamma$. It follows that $o_f(y)\ge \beta$ as desired.

Hence, we indeed have $o(f|_D)\ge o(L_f)$, as claimed.

Next, we determine the relation between the intervals $I_\alpha$ and the elements of $L_f$:

\begin{Observation}\label{obs_w-many}
    Let $z\in D$. Then there exists an infinite sequence of $\omega$ consecutive intervals $I_{\alpha}, I_{\alpha+1},\allowbreak I_{\alpha+2},\ldots$ that converge to $z$ if and only if $z$ is a non-limit near-root of $f$.
\end{Observation}

\begin{proof}
Suppose first that $z$ is not a near-root of $f$. Then there exists an $\varepsilon\in(0,x_{\mathrm{hi}}-z)$ such that $f(x)\ge\varepsilon$ for all $x\in [z-\varepsilon,z+\varepsilon]$ (where the part $x\in[z-\varepsilon, z]$ follows from the fact that $z$ is not a near-root, and the part $x\in[z, z+\varepsilon]$ follows from the fact that $f$ is ordinal decreasing). Therefore, an interval $I_\alpha$ whose left endpoint is in $(z-\varepsilon,z)$ must contain $z$ in its interior. Hence, there are not $\omega$-many intervals converging to $z$.

Now suppose $z$ is a near-root of $f$. Then no interval $I_\alpha$ whose left endpoint is left of $z$ can contain $z$ in its interior. If $z$ is a non-limit near-root of $f$, then there exists an $\varepsilon>0$ such that $(z-\varepsilon,z)$ contains no near-roots of $f$. Hence, for every $\varepsilon'<\varepsilon$, the interval $(z-\varepsilon,z-\varepsilon')$ contains only finitely many intervals $I_\alpha$. And therefore, there exist $\omega$-many consecutive intervals $I_\alpha$ converging to $z$. If, on the other hand, the near-root $z$ is itself a limit of near-roots of $f$, then some left-neighborhood of $z$ contains at least $\omega^2$-many intervals $I_\alpha$.
\end{proof}

Observation \ref{obs_w-many} implies that there is a one-to-one correspondence between non-limit elements of $L_f$ and sequences of $\omega$-many consecutive intervals $I_\alpha$, except for a possible final sequence of at most $\omega$-many intervals after the last element of $L_f$.
Lemma \ref{lem_num_intervals} follows.
\end{proof}

\begin{Lemma}\label{lem_J1J2}
    Let $J\subseteq\mathbb R$ be an interval, and let $J_1$, $J_2$ be a partition of $J$ into two intervals, with $J_1$ left of $J_2$. Then $o(f|_J)\le o(f|_{J_1})+o(f|_{J_2})$.
\end{Lemma}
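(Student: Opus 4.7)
The plan is to work with the per-point ordinals $o_{f|_J}(x)$ introduced earlier, which by definition satisfy
\begin{equation*}
o_{f|_J}(x) = \lim_{x' < x,\; f(x') < f(x),\; x' \in J}(o_{f|_J}(x') + 1),
\end{equation*}
and $o(f|_J) = \lim_{x \in J}(o_{f|_J}(x) + 1)$. The guiding picture is that an $f$-bad sequence in $J$ which starts at some $x_1 \in J_1$ must remain in $J_1$ forever (because $J_1$ lies entirely to the left of $J_2$ and the sequence is strictly decreasing), whereas a bad sequence starting in $J_2$ may at some point drop into $J_1$ and then stay there. Morally, the tree of bad sequences in $J$ is obtained by ``stacking'' $T_{f|_{J_1}}$ below $T_{f|_{J_2}}$, matching the ordinal sum on the right-hand side.

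First I would observe that for $x \in J_1$, every $x' < x$ with $x' \in J$ already lies in $J_1$, so the defining recursion for $o_{f|_J}(x)$ literally agrees with the one for $o_{f|_{J_1}}(x)$, giving $o_{f|_J}(x) = o_{f|_{J_1}}(x)$. The main technical step is then to prove, by transfinite induction on the tree rank of $x$ (equivalently on the ordinal $o_{f|_J}(x)$ itself), the bound
\begin{equation*}
o_{f|_J}(x) \;\le\; o(f|_{J_1}) + o_{f|_{J_2}}(x),\qquad x \in J_2.
\end{equation*}
The inductive step unfolds $o_{f|_J}(x)$ as a supremum over predecessors $x'$, splits it into the $x' \in J_1$ part (each term bounded by $o(f|_{J_1})$ via the previous observation) and the $x' \in J_2$ part (each term bounded by $o(f|_{J_1}) + o_{f|_{J_2}}(x') + 1$ by induction), and then uses continuity of ordinal addition in its right argument to pull the constant $o(f|_{J_1})$ outside the supremum over $J_2$-predecessors, leaving exactly $o(f|_{J_1}) + o_{f|_{J_2}}(x)$.

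Given the two pointwise bounds, the lemma falls out by taking the sup in $o(f|_J) = \sup_{x \in J}(o_{f|_J}(x) + 1)$: the contribution from $x \in J_1$ is at most $o(f|_{J_1})$, the contribution from $x \in J_2$ is at most $o(f|_{J_1}) + o(f|_{J_2})$, and the larger of these is $o(f|_{J_1}) + o(f|_{J_2})$.

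The main obstacle I anticipate is purely bookkeeping in the inductive step: the set of $J_1$-predecessors of $x$ with $f(x') < f(x)$, or the set of $J_2$-predecessors satisfying $x' < x$ and $f(x') < f(x)$, may be empty, and the identity $\alpha + \sup_i \beta_i = \sup_i(\alpha + \beta_i)$ requires a nonempty index set. One therefore has to handle the emptiness configurations separately (and the trivial boundary case where $J_2$ contains no points), but no new idea is needed beyond this case analysis.
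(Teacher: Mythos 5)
Your proposal is correct and rests on the same underlying observation as the paper's proof: since $J_1$ lies entirely to the left of $J_2$, every $f$-bad sequence in $J$ is a $J_2$-segment followed by a $J_1$-segment, which yields the bound $o(f|_{J_1})+o(f|_{J_2})$ with $o(f|_{J_1})$ on the left. The paper expresses this as a tree argument (embedding $T_{f|_J}$ into the tree obtained by attaching $T_{f|_{J_1}}$ below $T_{f|_{J_2}}$), while you carry out the equivalent rank computation with the pointwise ordinals $o_f(x)$ by transfinite induction; this is just a more explicit implementation of the same idea, and your noted empty-supremum and empty-$J_2$ cases are indeed the only extra bookkeeping.
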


\begin{proof}
    Every $f$-bad sequence in $J$ can be partitioned into an $f$-bad sequence in $J_2$ followed by an $f$-bad sequence in $J_1$ (though the converse is not necessarily true). Hence, the tree of $f$-bad sequences $T_{(f|_J)}$ is a subtree of the tree formed by attaching a copy of $T_{(f|_{J_1})}$ to each leaf of $T_{(f|_{J_2})}$. The ordinal type of this latter tree is $o(f|_{J_1})+o(f|_{J_2})$, so the claim follows.
\end{proof}

\begin{Lemma}\label{lem_fx_x}
    Let $f$ be ordinal decreasing, and let $g(x) = f(x)-x$ (which is ordinal decreasing by Lemma \ref{closed to addition}). Then $o(g)\le o(f)$.
\end{Lemma}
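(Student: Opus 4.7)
The plan is to show that every $g$-bad sequence is automatically an $f$-bad sequence; this makes $T_g$ a rooted subtree of $T_f$, and the ordinal inequality falls out for free.

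The one substantive step is the following algebraic observation: if $x_1>x_2$ and $g(x_1)>g(x_2)$, then $f(x_1)-x_1>f(x_2)-x_2$, which rearranges to $f(x_1)>f(x_2)+(x_1-x_2)>f(x_2)$ since $x_1>x_2$. Applying this at each consecutive pair of a $g$-bad sequence $x_1>x_2>x_3>\cdots$ yields $f(x_1)>f(x_2)>f(x_3)>\cdots$, so the same sequence is $f$-bad. Since any prefix of a $g$-bad sequence is $g$-bad as well, the vertex set of $T_g$ sits inside the vertex set of $T_f$ with matching parent/child edges, i.e.\ $T_g$ is a rooted subtree of $T_f$ sharing the same root.

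To conclude, I would prove by transfinite induction on the height $o_{T_f}(v)$ that every $v\in T_g$ satisfies $o_{T_g}(v)\le o_{T_f}(v)$: the children of $v$ in $T_g$ are a subset of its children in $T_f$, so by the inductive hypothesis and the defining formula $o(v)=\lim_{w\text{ child of }v}(o(w)+1)$, which is monotone both in the set of children and in their heights, the bound is preserved. Applying this at the root gives $o(g)\le o(f)$. There is no real obstacle here; the entire content of the lemma is the elementary observation that subtracting $x$ makes $g$-comparisons strictly more restrictive than $f$-comparisons.
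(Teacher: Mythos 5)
Your proof is correct and follows the same approach as the paper: show that every $g$-bad sequence is $f$-bad, so $T_g$ embeds in $T_f$ and the ordinal heights compare. You simply spell out the algebraic step and the monotonicity of heights, which the paper leaves implicit in its one-line proof.
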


\begin{proof}
    Every $g$-bad sequence is also $f$-bad, hence $T_g\subseteq T_f$.
\end{proof}

\begin{Lemma}\label{lem_o_composition}
    Suppose $g(x)$ is ordinal decreasing up to $y$ and $f(x)$ is ordinal decreasing up to $y'=\sup_{x<y}(-g(x))$. Let $h(x) = f(-g(x))$ (which is ordinal decreasing up to $y$ by Lemma \ref{closed to composition}). Then $o(h|_y)\le o(g|_y)\otimes o(f|_{y'})$.
\end{Lemma}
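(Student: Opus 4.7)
My plan is to prove the bound by combining a Ramsey-style case analysis with the natural arithmetic of ordinals. To each $x$ in the domain of $h$, attach the pair $(o_g(x),\,o_f(-g(x))) \in o(g|_y)\times o(f|_{y'})$. The key observation is that along any $h$-bad sequence the values of $-g$ must be pairwise distinct, since two equal $-g$-values would force $h$-values to coincide. Hence for any $h$-bad pair $x>x'$, either $-g(x')<-g(x)$, in which case $o_f(-g(x'))<o_f(-g(x))$ by the defining recursion of $o_f$; or $g(x')<g(x)$, in which case together with $x'<x$ the defining recursion of $o_g$ yields $o_g(x')<o_g(x)$. Thus at each step of an $h$-bad sequence at least one of the two attached ordinals strictly drops.

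Given this, I would prove by transfinite induction on $o_h(x)$ the pointwise estimate $o_h(x) \le o_g(x)\otimes o(f|_{y'}) \oplus o_f(-g(x))$ and then take the supremum over $x<y$. In the $g$-decrease sub-case, the drop $o_g(x')+1\le o_g(x)$ together with the identity $(o_g(x')\oplus 1)\otimes o(f|_{y'}) = o_g(x')\otimes o(f|_{y'})\oplus o(f|_{y'})$ and the bound $o_f(-g(x'))<o(f|_{y'})$ absorbs the successor; in the $f$-decrease sub-case the drop $o_f(-g(x'))+1\le o_f(-g(x))$ absorbs it directly via $(\alpha\otimes\beta)\oplus\gamma + 1 = \alpha\otimes\beta\oplus(\gamma+1)$. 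Applying the same distributive identity to the global supremum $\sup_{x<y}(o_h(x)+1)$ then collapses the pointwise bound to $o(h|_y)\le o(g|_y)\otimes o(f|_{y'})$.

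The main obstacle is the $f$-decrease sub-case, because although $o_f(-g(x'))$ strictly drops, the rank $o_g(x')$ is not automatically bounded by $o_g(x)$ — when $g(x')>g(x)$ the $g$-rank can actually increase. I expect the resolution to require an outer induction on $o(f|_{y'})$: the sub-family of $T_h$-descendants of $(x)$ arising from Case A is governed by the composition $f'\circ(-g)$ where $f'$ is $f$ restricted to $\{z<-g(x):f(z)<f(-g(x))\}$, which has ordinal type $o_f(-g(x))<o(f|_{y'})$, and the outer inductive hypothesis then applies to bound that subtree's height; the Case-B contributions combine with it via Lemma \ref{lem_J1J2}-style bookkeeping and the natural-arithmetic identities above. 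Assembling the two cases should close the induction and yield the claimed $o(h|_y)\le o(g|_y)\otimes o(f|_{y'})$.
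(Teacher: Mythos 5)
Your opening move---attaching the pair $E(x)=(o_g(x),\,o_f(-g(x)))$ and checking that for any $h$-bad pair $x>x'$ at least one coordinate strictly drops---is exactly the paper's argument: that case analysis is the contrapositive of the paper's quasi-embedding lemma (if $x>x'$ and $E(x)\preceq E(x')$ then $h(x)\le h(x')$), and it shows that the images of an $h$-bad sequence form a bad sequence in the product WPO $o(g|_y)\times o(f|_{y'})$ with the componentwise order. At that point the paper is done in one line, because the maximal order type of a Cartesian product of WPOs is the natural product of their order types (de Jongh--Parikh, recalled in Section \ref{subsec_WPO}), giving $o(h|_y)\le o(g|_y)\otimes o(f|_{y'})$ immediately.

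The genuine gap is in your second half, where instead of invoking that product formula you try to re-derive the bound by a pointwise induction on $o_h$ with the invariant $o_h(x)\le o_g(x)\otimes o(f|_{y'})\oplus o_f(-g(x))$. As you yourself concede, the induction does not close in the $f$-drop case: there $g(x')>g(x)$, so $o_g(x')$ may well exceed $o_g(x)$, the inductive bound at $x'$ carries the larger term $o_g(x')\otimes o(f|_{y'})$, and nothing forces it under $o_g(x)\otimes o(f|_{y'})\oplus o_f(-g(x))$. Your proposed repair (an outer induction on $o(f|_{y'})$, restricting $f$ to $\{z<-g(x):f(z)<f(-g(x))\}$, plus ``Lemma \ref{lem_J1J2}-style bookkeeping'') is only a sketch, hedged with ``I expect'' and ``should'', and it is not clear the pieces recombine to the natural product rather than something larger; in effect you would be reproving the de Jongh--Parikh product theorem from scratch, which is precisely the hard combinatorial content here. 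The fix is simple: stop after your case analysis, note that $E$ maps $h$-bad sequences to bad sequences of the product WPO, and cite $o(A\times B)=o(A)\otimes o(B)$.
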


\begin{proof}
    Let $A=o(g|_y)\times o(f|_{y'})$ be WPO by the standard product order mentioned in Section \ref{subsec_WPO}. Given $x\le y$, let $E(x) = (o_g(x), o_f(-g(x)))\in A$. Then $E$ satisfies the following property:
    \begin{Lemma}
    If $x>x'$ and $E(x)\preceq E(x')$ then $h(x)\le h(x')$. (Hence, $E$ is analogous to what Rathjen and Weiermann \cite{RATHJEN199349} call a \emph{quasi-embedding}.) 
    \end{Lemma}
    \begin{proof}
        We have $x>x'$ and $o_g(x)\le o_g(x')$. Hence, $g(x)\le g(x')$ (because $x>x'$ and $g(x)>g(x')$ would imply $o_g(x)>o_g(x')$). If $g(x)=g(x')$ then $h(x)=h(x')$ and we are done. Otherwise, $g(x)<g(x')$, so $-g(x)>-g(x')$. We also have $o_f(-g(x))\le o_f(-g(x'))$. Hence, $h(x)=f(-g(x))\le f(-g(x'))=h(x')$, as desired.
    \end{proof}
    Hence, if $x_1> x_2 > \cdots > x_n$ is an $h$-bad sequence then $E(x_1), E(x_2), \ldots, E(x_n)$ is a bad sequence in $A$. Therefore, $o(h|_y)\le o(A) = o(g|_y)\otimes o(f|_{y'})$. This completes the proof of Lemma \ref{lem_o_composition}.
\end{proof}

The following lemma is not actually used in this paper, but it might be of independent interest:

\begin{Lemma}
    Suppose $f$ and $g$ are ordinal decreasing, and let $h(x)=f(x)+g(x)$ (which is ordinal decreasing by Lemma \ref{closed to addition}). Then $o(h)\le o(f)\otimes o(g)$.
\end{Lemma}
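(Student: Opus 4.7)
The plan is to mimic the quasi-embedding argument used in Lemma \ref{lem_o_composition}, now applied to the additive case. Let $A$ be the Cartesian product of the WPOs corresponding to $o(f)$ and $o(g)$, ordered componentwise; by the result mentioned in Section \ref{subsec_WPO}, $o(A) = o(f)\otimes o(g)$. For each $x\in\mathbb R$ define
\begin{equation*}
    E(x) = \bigl(o_f(x),\, o_g(x)\bigr)\in A.
\end{equation*}
I claim that $E$ is a quasi-embedding from the reals (ordered by $>$) into $A$ with respect to $h$, in the same sense as the lemma above: whenever $x > x'$ and $E(x)\preceq E(x')$, we have $h(x)\le h(x')$.

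The verification of the quasi-embedding property is straightforward and is essentially contained in the proof of Lemma \ref{lem_o_composition}. Assume $x>x'$ and $o_f(x)\le o_f(x')$, $o_g(x)\le o_g(x')$. From the recursive definition of $o_f$, if we had $f(x)>f(x')$ together with $x>x'$, then $o_f(x)\ge o_f(x')+1$, contradicting $o_f(x)\le o_f(x')$. Hence $f(x)\le f(x')$, and symmetrically $g(x)\le g(x')$. Adding these inequalities yields $h(x)\le h(x')$.

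Now if $x_1>x_2>\cdots>x_n$ is an $h$-bad sequence, then $h(x_1)>h(x_2)>\cdots>h(x_n)$, and by the quasi-embedding property there cannot exist $i<j$ with $E(x_i)\preceq E(x_j)$. Hence $E(x_1),\ldots,E(x_n)$ is a bad sequence in $A$. This gives an embedding of the tree of $h$-bad sequences into the tree of bad sequences of $A$, so $o(h)\le o(A) = o(f)\otimes o(g)$, as required.

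There is no real obstacle here; the only point to be careful about is the defining property of $o_f$ that forces $f(x)\le f(x')$ from $x>x'$ and $o_f(x)\le o_f(x')$, and this is exactly the reasoning already used in Lemma \ref{lem_o_composition}. The lemma is essentially a direct analogue of that result, with the product order on $A$ playing the same role and addition replacing composition in the outer function.
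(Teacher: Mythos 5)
Your proof is correct and follows exactly the paper's approach: the paper proves this lemma by the same quasi-embedding $E(x)=(o_f(x),o_g(x))$ into the product WPO, and your write-up simply spells out the verification that the paper leaves implicit (via its Lemma \ref{lem_o_composition}). No issues.
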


\begin{proof}
    The claim follows by considering the quasi-embedding $E(x) = (o_f(x), o_g(x))$.
\end{proof}

\subsection{The case $k=1$}

When $k=1$ the algorithm is
\begin{equation*}
    M(x) = \begin{cases}f(x),& x<0;\\g(-M(x-s(x))),& x\ge 0.\end{cases}
\end{equation*}

Consider the partition of $[0,\infty)$ into intervals induced by $s$. Namely, let
\begin{align*}
    p_0 &= 0;\\
    p_{\alpha+1} &= \max{\{ y : x- p_\alpha < s(x) \text{ for all $x<y$}\}}, \qquad \text{for $\alpha\ge 0$};\\
    p_\alpha &= \lim_{\beta<\alpha} p_\beta, \qquad \text{for $\alpha$ limit}.
\end{align*}
Then define the intervals $I_{-1} = (-\infty, 0)$ and $I_\alpha = [p_\alpha, p_{\alpha+1})$ for ordinals $\alpha$.

Denote $\tau_\alpha = o(M|_{p_\alpha})$. We will compute $\tau_\alpha$ by ordinal induction. The base case is $\alpha=0$, for which $p_0=0$, and thus $\tau_0 = o(M|_{I_{-1}}) = o(f)$.

If $x\in I_{\alpha} = [p_\alpha, p_{\alpha+1})$ then by the definition of $p_{\alpha+1}$ we have $x-s(x) < p_\alpha$. Therefore, by Lemma \ref{lem_J1J2}, Lemma \ref{lem_fx_x}, and two applications of Lemma \ref{lem_o_composition},
\begin{align*}
    \tau_{\alpha+1} = o(M|_{ p_{\alpha+1}}) &\le o(M|_{p_\alpha}) + o(M|_{I_\alpha}) \\
    &\le \tau_\alpha+o(g)\otimes o(M|_{p_\alpha})\otimes o(s(x)-x)\\
    &\le \tau_\alpha + o(g)\otimes\tau_\alpha \otimes o(s).
\end{align*}

Let $\gamma$ be large enough such that $\max{\{o(f),o(g),o(s)\}} < \omega^{\omega^\gamma}$. Then $\tau_0 = o(f)\le\omega^{\omega^\gamma}$, and it follows by ordinal induction on $\alpha$ that $\tau_\alpha\le \omega^{\omega^\gamma(1+\alpha)}$. By Lemma \ref{lem_num_intervals}, we conclude that $o(M)\le \omega^{\omega^{\gamma+1}(o(s)+1)}$, as desired.

\subsection{The case $k=2$}

When $k=2$ the algorithm is
\begin{equation*}
    M(x) = \begin{cases}f(x),& x<0;\\g_1(-M(x-g_2(-M(x-s(x))))),& x\ge 0.\end{cases}
\end{equation*}

Denote $M_2(x) = g_2(-M(x-s(x)))$, and $M(x) = M_1(x) = g_1(-M(x-M_2(x)))$. Define the points $p_\alpha$ and the intervals $I_\alpha$ as above, based on the function $s$.

Partition each interval $I_\alpha$ into subintervals $I_{\alpha,\beta}$ based on the function $M_2$, as follows. Define points $p_{\alpha,\beta}$ by
\begin{align*}
    p_{\alpha,0} &= p_\alpha;\\
    p_{\alpha,\beta+1} &= \max{\{ y\le p_{\alpha+1} : x- p_{\alpha,\beta}<M_2(x) \text{ for all $x<y$}\}}, \qquad \text{for $\beta\ge 0$};\\
    p_{\alpha,\beta} &= \lim_{\beta'<\beta} p_{\alpha,\beta'}, \qquad \text{for $\beta$ limit}.
\end{align*}
Then define the subintervals $I_{\alpha,\beta}=[p_{\alpha,\beta}, p_{\alpha,\beta+1})$.

Denote $\tau_\alpha = o(M|_{p_\alpha})$ and $\tau_{\alpha,\beta} = o(M|_{p_{\alpha,\beta}})$. Also denote $\sigma_\alpha=o(M_2|_{I_\alpha})$.

\begin{Lemma}\label{lem_sigma_alpha}
    We have $\sigma_{\alpha}\le \tau_\alpha\otimes o(s)\otimes o(g_2)$.
\end{Lemma}

\begin{proof}
    For $x\in I_\alpha$ we have $x-s(x)<p_\alpha$ (by the definition of $p_{\alpha+1}$). Hence,
    \begin{equation*}
        \sigma_\alpha = o(M_2|_{I_\alpha})\le o(g_2)\otimes o(M|_{p_\alpha})\otimes o(s(x)-x)\le o(g_2)\otimes\tau_\alpha\otimes o(s),
    \end{equation*}
     by Lemmas \ref{lem_fx_x} and \ref{lem_o_composition}.
\end{proof}

\begin{Lemma}\label{lem_tau_alpha_beta}
    We have $\tau_{\alpha,0} = \tau_\alpha$ and $\tau_{\alpha,\beta+1} \le \tau_{\alpha,\beta}+\tau_{\alpha,\beta}\otimes\tau_\alpha\otimes o(s)\otimes o(g_2)\otimes o(g_1)$.
\end{Lemma}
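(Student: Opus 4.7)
The first equality is immediate from the definition: $p_{\alpha,0} = p_\alpha$ gives $\tau_{\alpha,0} = \tau_\alpha$. For the inequality, I plan to split the interval $(-\infty, p_{\alpha,\beta+1})$ as $(-\infty, p_{\alpha,\beta}) \cup I_{\alpha,\beta}$ and apply Lemma \ref{lem_J1J2}, yielding
\begin{equation*}
    \tau_{\alpha,\beta+1} \le \tau_{\alpha,\beta} + o(M|_{I_{\alpha,\beta}}).
\end{equation*}
The whole content of the lemma is therefore the bound $o(M|_{I_{\alpha,\beta}}) \le \tau_{\alpha,\beta}\otimes\tau_\alpha\otimes o(s)\otimes o(g_2)\otimes o(g_1)$.

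To bound $o(M|_{I_{\alpha,\beta}})$, I will peel off the layers of the recursion using Lemma \ref{lem_o_composition}, noting at each step that the relevant arguments land in intervals whose ordinal types we already understand. On $I_{\alpha,\beta}$ we have $M(x) = g_1(-M(x - M_2(x)))$, so writing $h(x) = M(x - M_2(x))$ and applying Lemma \ref{lem_o_composition} with outer function $g_1$, I get $o(M|_{I_{\alpha,\beta}}) \le o(h|_{I_{\alpha,\beta}}) \otimes o(g_1)$. Next, $h(x) = M(-(M_2(x)-x))$, so applying Lemma \ref{lem_o_composition} again with outer function $M$ and inner function $M_2(x) - x$, and noting that for $x \in I_{\alpha,\beta}$ we have $x - M_2(x) < p_{\alpha,\beta}$, I obtain $o(h|_{I_{\alpha,\beta}}) \le o((M_2 - \mathrm{id})|_{I_{\alpha,\beta}}) \otimes \tau_{\alpha,\beta}$. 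By Lemma \ref{lem_fx_x}, $o((M_2 - \mathrm{id})|_{I_{\alpha,\beta}}) \le o(M_2|_{I_{\alpha,\beta}})$.

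Finally, I need to bound $o(M_2|_{I_{\alpha,\beta}})$. Since $M_2(x) = g_2(-M(x-s(x)))$, the same two-step peeling applies: Lemma \ref{lem_o_composition} with outer function $g_2$ and then with outer function $M$ gives $o(M_2|_{I_{\alpha,\beta}}) \le o((s - \mathrm{id})|_{I_{\alpha,\beta}}) \otimes o(M|_{(-\infty, p_\alpha)}) \otimes o(g_2)$, where the range of the inner $M$ is controlled by the fact that $x - s(x) < p_\alpha$ for all $x \in I_\alpha \supseteq I_{\alpha,\beta}$. Another application of Lemma \ref{lem_fx_x} gives $o((s-\mathrm{id})|_{I_{\alpha,\beta}}) \le o(s|_{I_{\alpha,\beta}}) \le o(s)$, so $o(M_2|_{I_{\alpha,\beta}}) \le o(s) \otimes \tau_\alpha \otimes o(g_2)$. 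Chaining everything together via the associativity and monotonicity of $\otimes$ yields the stated bound.

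The main obstacle I anticipate is purely bookkeeping: making sure at every application of Lemma \ref{lem_o_composition} that the domain $y$ and the ``$y'$'' supremum land in the right intervals — in particular, that the key interval inclusions $x - s(x) < p_\alpha$ (for $x \in I_\alpha$) and $x - M_2(x) < p_{\alpha,\beta}$ (for $x \in I_{\alpha,\beta}$) are used at the correct stages so that the inner $M$ contributes $\tau_\alpha$ in the first peeling and $\tau_{\alpha,\beta}$ in the second. No new ideas beyond Lemmas \ref{lem_J1J2}, \ref{lem_fx_x}, and \ref{lem_o_composition} seem to be required.
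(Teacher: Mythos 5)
Your proposal is correct and follows essentially the same route as the paper: the paper's own (much terser) proof invokes exactly Lemmas \ref{lem_J1J2}, \ref{lem_fx_x}, and \ref{lem_o_composition} together with the two key facts $x-s(x)<p_\alpha$ and $x-M_2(x)<p_{\alpha,\beta}$ for $x\in I_{\alpha,\beta}$, and your write-up just spells out the peeling of the compositions (with the bound $o(M_2|_{I_{\alpha,\beta}})\le\tau_\alpha\otimes o(s)\otimes o(g_2)$, which the paper isolates as the subsequent lemma on $\sigma_\alpha$).
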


\begin{proof}
    The first claim follows by definition. For the second claim, Lemma \ref{lem_J1J2} yields
    \begin{equation*}
        \tau_{\alpha,\beta+1}=o(M|_{p_{\alpha,\beta+1}}) \le o(M|_{p_{\alpha,\beta}})+o(M|_{I_{\alpha,\beta}})=\tau_{\alpha,\beta}+o(M|_{I_{\alpha,\beta}}).
    \end{equation*}
    Let us bound $o(M|_{I_{\alpha,\beta}})$. If $x\in I_{\alpha,\beta}$ then $x-M_2(x)<p_{\alpha,\beta}$ (by the definition of $p_{\alpha,\beta+1}$). Hence, by Lemmas \ref{lem_fx_x} and \ref{lem_o_composition},
    \begin{align*}
        o(M|_{I_{\alpha,\beta}})&\le o(g_1)\otimes o(M|_{p_{\alpha,\beta}})\otimes o((M_2(x)-x)|_{I_{\alpha,\beta}})\\
        &\le o(g_1)\otimes \tau_{\alpha,\beta}\otimes o(M_2|_{I_{\alpha,\beta}})\\
        &\le o(g_1)\otimes\tau_{\alpha,\beta}\otimes o(M_2|_{I_\alpha})=o(g_1)\otimes\tau_{\alpha,\beta}\otimes\sigma_\alpha,
    \end{align*}
    since $I_{\alpha,\beta}\subseteq I_\alpha$. The bound for $\tau_{\alpha,\beta+1}$ now follows by Lemma \ref{lem_sigma_alpha}.
\end{proof}

From Lemma \ref{lem_tau_alpha_beta} it follows, by transfinite induction on $\beta$, that $\tau_{\alpha,\beta}$ is bounded by the repeated natural product $\tau_{\alpha,\beta}\le \tau_\alpha \otimes (\tau_\alpha\otimes o(s)\otimes o(g_2)\otimes o(g_1))^{[\beta]}$ (see Section \ref{subsec_natural} above). Let us take $\beta$ to be the ordinal number of subintervals into which interval $I_\alpha$ is partitioned, so $\tau_{\alpha+1}=\tau_{\alpha,\beta}$. By Lemmas \ref{lem_num_intervals} and \ref{lem_sigma_alpha} we have $\beta\le \omega\cdot(\sigma_\alpha+1)\le \omega\cdot(\tau_\alpha\otimes o(s)\otimes o(g_2)+1)$. Hence:
\begin{Corollary}\label{cor_tau_ap1}
    We have $\tau_{\alpha+1} \le (\tau_\alpha\otimes o(s)\otimes o(g_2)\otimes o(g_1))^{[\omega\cdot(\tau_\alpha\otimes o(s)\otimes o(g_2)+2)]}$.
\end{Corollary}

\begin{Lemma}\label{lem_tau_omega_beta}
    Let $\gamma$ be smallest such that $\max{\{o(f),o(g_1), o(g_2), o(s)\}} < \varepsilon_\gamma$. Then $\tau_{\omega\beta}\le\varepsilon_{\gamma+\beta}$ for all $\beta$.
\end{Lemma}

\begin{proof}
Denote $\mathrm{tower}(\delta,k)=\delta^{\delta^{\cdots^\delta}}$ with $k$ instances of $\delta$. It can be checked that, if $\delta\ge \omega$ and $k\le \ell$, then $\mathrm{tower}(\delta,k)^{\mathrm{tower}(\delta,\ell)}=\mathrm{tower}(\delta,\ell+1)$.

    We proceed by ordinal induction on $\beta$. If $\beta=0$ then $\tau_0=o(f)<\varepsilon_\gamma$, as desired.

    Now suppose by ordinal induction that $\tau_{\omega\beta}\le\varepsilon_{\gamma+\beta}$. Since $\tau_{\omega\beta},o(s),o(g_1),o(g_2)$ are all bounded by $\varepsilon_{\gamma+\beta}$, Corollary \ref{cor_tau_ap1} implies
    \begin{equation*}
\tau_{\omega\beta+1}\le(\varepsilon_{\gamma+\beta}^{[4]})^{[\omega\cdot(\varepsilon_{\gamma+\beta}^{[3]}+2)]} = \varepsilon_{\gamma+\beta}^{\omega\cdot(\varepsilon_{\gamma+\beta}^{3}+2)}\le\varepsilon_{\gamma+\beta}^{\varepsilon_{\gamma+\beta}^{\varepsilon_{\gamma+\beta}}}=\mathrm{tower}(\varepsilon_{\gamma+\beta},3).
    \end{equation*}
    Another application of Corollary \ref{cor_tau_ap1} yields
    \begin{equation*}
        \tau_{\omega\beta+2}\le(\mathrm{tower}(\varepsilon_{\gamma+\beta},3)^{[4]})^{[\omega\cdot(\mathrm{tower}(\varepsilon_{\gamma+\beta},3)^{[3]}+2)]}\le\mathrm{tower}(\mathrm{tower}(\varepsilon_{\gamma+\beta},3),3)=\mathrm{tower}(\varepsilon_{\gamma+\beta},5).
    \end{equation*}
    In general, by induction on $k$, we obtain $\tau_{\omega\beta+k}\le\mathrm{tower}(\varepsilon_{\gamma+\beta},2k+1)$. Hence,
    \begin{equation*}
        \tau_{\omega(\beta+1)}=
    \tau_{\omega\beta+\omega}\le\sup_{k\in\N}\mathrm{tower}(\varepsilon_{\gamma+\beta},k)=\varepsilon_{\gamma+\beta+1},
    \end{equation*}
    as desired. The case where $\beta$ is a limit ordinal follows by continuity.
\end{proof}

By Lemma \ref{lem_num_intervals}, the ordinal number of intervals $I_\alpha$ is at most $\omega\cdot(o(s)+1)$. Hence, Lemma \ref{lem_tau_omega_beta} implies that $o(M) \le \varepsilon_{\gamma + o(s)+1}$, as claimed.

\subsection{The general case}

The algorithm for general $k$ for $x\ge 0$ is
\begin{align*}
M(x) = M_1(x) &= g_1(-M(x-M_2(x))), \quad\text{where}\\
M_2(x) &= g_2(-M(x-M_3(x))), \quad\text{where}\\
&\vdots\\
M_{k-1}(x) &= g_{k-1}(-M(x-M_k(x))), \quad\text{where}\\
M_k(x) &= g_k(-M(x-s(x))).
\end{align*}
Define the endpoints $p_{\alpha_1,\ldots,\alpha_i}$ for $i=1,\ldots,k$ by induction on $i$, and for each $i$ by ordinal induction on $\alpha_i$, as follows. For $i=1$ we let
\begin{align*}
    p_0 &= 0;\\
    p_{\alpha_1+1} &= \max{\{y: x-p_{\alpha_1}<s(x) \text{ for all $x<y$}\}};\\
    p_{\alpha_1} &= \lim_{\beta<\alpha_1} p_\beta,\qquad \text{for $\alpha_1$ limit}.
\end{align*}
For $2\le i\le k$ we let
\begin{align*}
    p_{\alpha_1,\ldots,\alpha_{i-1},0}&=p_{\alpha_1,\ldots,\alpha_{i-1}};\\
    p_{\alpha_1,\ldots,\alpha_{i-1},\alpha_i+1} &= \max{\{y\le p_{\alpha_1,\ldots,\alpha_{i-1}+1}: x-p_{\alpha_1,\ldots,\alpha_i}<M_{k-i+2}(x) \text{ for all $x<y$}\}};\\
    p_{\alpha_1,\ldots,\alpha_i}&=\lim_{\beta<\alpha_i} p_{\alpha_1,\ldots,\alpha_{i-1},\beta},\qquad\text{for $\alpha_i$ limit}.
\end{align*}

We define a $k$-ary hierarchy of intervals: For each $1\le i\le k$, we let $I_{\alpha_1,\ldots,\alpha_i} = [p_{\alpha_1,\ldots,\alpha_i},p_{\alpha_1,\ldots,\alpha_{i-1},\alpha_i+1})$. We call this a \emph{level-$i$} interval. Hence, each level-$1$ interval is partitioned into level-$2$ intervals, each of which is partitioned into level-$3$ intervals, and so on, up to level-$k$ intervals.

For $1\le i\le k$, define the ordinals $\tau_{\alpha_1,\ldots,\alpha_i}=o(M|_{p_{\alpha_1,\ldots,\alpha_i}})$, and the ordinals $\sigma_{\alpha_1,\ldots,\alpha_i}=o(M_{k-i+1}|_{I_{\alpha_1,\ldots,\alpha_i}})$. By Lemma \ref{lem_num_intervals}, for each $1\le i\le k-1$, the level-$i$ interval $I_{\alpha_1,\ldots,\alpha_i}$ is partitioned into at most
\begin{equation}\label{eq_num_subintervals}
    \omega\cdot(\sigma_{\alpha_1,\ldots,\alpha_i}+1)
\end{equation}
level-$(i+1)$ subintervals.

\begin{Lemma}\label{lem_sigma}
    For every $1\le i\le k$ we have
    \begin{equation*}
    \sigma_{\alpha_1,\ldots,\alpha_i}\le o(s)\otimes o(g_k)\otimes\cdots\otimes o(g_{k-i+1})\otimes \tau_{\alpha_1}\otimes\cdots\otimes \tau_{\alpha_1,\ldots,\alpha_i}.
    \end{equation*}
\end{Lemma}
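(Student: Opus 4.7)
The plan is to prove the bound by transfinite induction on $i$, applying Lemmas~\ref{closed to addition}, \ref{lem_fx_x}, and \ref{lem_o_composition} to peel off the layers of $M_{k-i+1}(x)=g_{k-i+1}(-M(x-M_{k-i+2}(x)))$, where $M_{k+1}=s$ covers the base case. Before starting the induction I would observe once and for all that $M$ is strictly positive everywhere, which follows by induction on the recursion from $f,g_1,\ldots,g_k>0$; this ensures $-M$ lands in the domain $(-\infty,0)$ of each $g_j$, so that the outer compositions below are legitimate instances of Lemma~\ref{lem_o_composition}.

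For the base case $i=1$, I would work in $I_{\alpha_1}$. By definition of $p_{\alpha_1+1}$ we have $x-s(x)<p_{\alpha_1}$ for every $x\in I_{\alpha_1}$. Writing $x-s(x)=-(s(x)-x)$, Lemma~\ref{closed to addition} shows that $s(x)-x$ is ordinal decreasing, and Lemma~\ref{lem_fx_x} bounds $o((s(x)-x)|_{I_{\alpha_1}})\le o(s|_{I_{\alpha_1}})\le o(s)$. Applying Lemma~\ref{lem_o_composition} with $g(x)=s(x)-x$ and $f=M$ then yields $o(M(x-s(x))|_{I_{\alpha_1}})\le o(s)\otimes\tau_{\alpha_1}$, and a second application with the outer $g_k$ gives $\sigma_{\alpha_1}\le o(s)\otimes\tau_{\alpha_1}\otimes o(g_k)$, which matches the claim after rearranging by commutativity of $\otimes$.

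For the inductive step $i-1\to i$ (with $i\ge 2$), I would run exactly the same argument, but with $M_{k-i+2}$ playing the role $s$ did at level $1$. Since $I_{\alpha_1,\ldots,\alpha_i}\subseteq I_{\alpha_1,\ldots,\alpha_{i-1}}$, restriction gives $o(M_{k-i+2}|_{I_{\alpha_1,\ldots,\alpha_i}})\le\sigma_{\alpha_1,\ldots,\alpha_{i-1}}$, and by the definition of $p_{\alpha_1,\ldots,\alpha_{i-1},\alpha_i+1}$ we have $x-M_{k-i+2}(x)<p_{\alpha_1,\ldots,\alpha_i}$ throughout $I_{\alpha_1,\ldots,\alpha_i}$. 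The same two applications of Lemma~\ref{lem_o_composition} (combined with Lemma~\ref{lem_fx_x} to subtract $x$) then deliver
\[\sigma_{\alpha_1,\ldots,\alpha_i}\le\sigma_{\alpha_1,\ldots,\alpha_{i-1}}\otimes\tau_{\alpha_1,\ldots,\alpha_i}\otimes o(g_{k-i+1}),\]
into which I would substitute the induction hypothesis and reshuffle via commutativity of $\otimes$ to produce the stated chain.

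I do not anticipate a serious obstacle beyond careful index bookkeeping: the shifts $k-i+1$ and $k-i+2$ must be kept in sync with the partition level $i$, and Lemma~\ref{lem_o_composition} must be invoked each time with the correct sup bounding the domain of the outer $g_{k-i+1}$ (where the positivity of $M$ is precisely what makes the sup lie in $(-\infty,0)$). No tool beyond the three lemmas cited is required.
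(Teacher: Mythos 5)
Your proof is correct and follows exactly the route the paper intends (it states this lemma without proof, but your argument is the same layer-by-layer peeling via Lemmas \ref{closed to addition}, \ref{lem_fx_x}, and \ref{lem_o_composition} that the paper spells out for the $k=2$ analogue $\sigma_\alpha\le\tau_\alpha\otimes o(s)\otimes o(g_2)$). The key inequalities $x-s(x)<p_{\alpha_1}$ and $x-M_{k-i+2}(x)<p_{\alpha_1,\ldots,\alpha_i}$ on $I_{\alpha_1,\ldots,\alpha_i}$, plus positivity of $M$ for the outer $g_{k-i+1}$, are exactly the right ingredients, and the recursion $\sigma_{\alpha_1,\ldots,\alpha_i}\le\sigma_{\alpha_1,\ldots,\alpha_{i-1}}\otimes\tau_{\alpha_1,\ldots,\alpha_i}\otimes o(g_{k-i+1})$ unwinds to the stated bound by commutativity of $\otimes$.
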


\begin{proof}
    By induction on $i$. First, let $i=1$. Denote $I=I_{\alpha_1}$. If $x\in I$, then the expression $x-s(x)$ (which is part of the definition of $M_k$) is smaller than $p_{\alpha_1}$ (by the definition of $p_{\alpha_1+1}$). Hence, by Lemmas \ref{lem_fx_x} and \ref{lem_o_composition},
    \begin{align*}
        \sigma_{\alpha_1} = o(M_k|_I)&\le o(g_k)\otimes o(M|_{p_{\alpha_1}})\otimes o(s(x)-x)\\
        &\le o(g_k)\otimes \tau_{\alpha_1}\otimes o(s),
    \end{align*}
    as claimed.

    Now, suppose $i\ge 2$, and assume the bound on $\sigma_{\alpha_1,\ldots,\alpha_{i-1}}$ by induction. Denote $I=I_{\alpha_1,\ldots,\alpha_i}$. If $x\in I$, then the expression $x-M_{k-i+2}(x)$ (which is in the definition of $M_{k-i+1}$) is smaller than $p_{\alpha_1,\ldots,\alpha_i}$. Hence,
    \begin{align*}
        \sigma_{\alpha_1,\ldots,\alpha_i}=o(M_{k-i+1}|_{I})&\le o(g_{k-i+1})\otimes o(M|_{p_{\alpha_1,\ldots,\alpha_i}})\otimes o((M_{k-i+2}(x)-x)|_{I})\\
        &\le o(g_{k-i+1})\otimes\tau_{\alpha_1,\ldots,\alpha_i}\otimes o(M_{k-i+2}|_I).
    \end{align*}
    Since $I\subseteq I_{\alpha_1,\ldots,\alpha_{i-1}}$, we have $o(M_{k-i+2}|_I)\le o(M_{k-i+2}|_{I_{\alpha_1,\ldots,\alpha_{i-1}}}) = \sigma_{\alpha_1,\ldots,\alpha_{i-1}}$. The claimed bound on $\sigma_{\alpha_1,\ldots,\alpha_i}$ follows.
\end{proof}

\begin{Lemma}
    We have
    \begin{align*}
        \tau_{\alpha_1,\ldots,\alpha_{k-1},0}&=\tau_{\alpha_1,\ldots,\alpha_{k-1}},\\
        \tau_{\alpha_1,\ldots,\alpha_{k-1},\alpha_k+1} &\le \tau_{\alpha_1,\ldots,\alpha_k} + \tau_{\alpha_1}\otimes\cdots\otimes\tau_{\alpha_1,\ldots,\alpha_k}\otimes o(s)\otimes o(g_1)\otimes\cdots\otimes o(g_k).
    \end{align*}
\end{Lemma}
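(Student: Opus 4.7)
The first equality is immediate from the definitions, since $p_{\alpha_1,\ldots,\alpha_{k-1},0}=p_{\alpha_1,\ldots,\alpha_{k-1}}$.

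For the second inequality, my plan is to follow the same template as the $k=2$ case. First I would apply Lemma~\ref{lem_J1J2} to the partition of $(-\infty,p_{\alpha_1,\ldots,\alpha_{k-1},\alpha_k+1})$ into $(-\infty,p_{\alpha_1,\ldots,\alpha_k})$ and $I_{\alpha_1,\ldots,\alpha_k}$, which immediately gives $\tau_{\alpha_1,\ldots,\alpha_{k-1},\alpha_k+1}\le\tau_{\alpha_1,\ldots,\alpha_k}+o(M|_{I_{\alpha_1,\ldots,\alpha_k}})$. The remaining task is therefore to show that $o(M|_{I_{\alpha_1,\ldots,\alpha_k}})$ is bounded by $\tau_{\alpha_1}\otimes\cdots\otimes\tau_{\alpha_1,\ldots,\alpha_k}\otimes o(s)\otimes o(g_1)\otimes\cdots\otimes o(g_k)$.

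To obtain this bound I would unpack the recursive definition $M(x)=g_1(-M(x-M_2(x)))$ valid on $I_{\alpha_1,\ldots,\alpha_k}$. The interval was constructed precisely so that $x-M_2(x)<p_{\alpha_1,\ldots,\alpha_k}$ there, so two successive applications of Lemma~\ref{lem_o_composition} together with Lemma~\ref{lem_fx_x} applied to $M_2(x)-x$ yield
\begin{equation*}
o(M|_{I_{\alpha_1,\ldots,\alpha_k}})\le o(g_1)\otimes o(M_2|_{I_{\alpha_1,\ldots,\alpha_k}})\otimes\tau_{\alpha_1,\ldots,\alpha_k}.
\end{equation*}
Since $I_{\alpha_1,\ldots,\alpha_k}\subseteq I_{\alpha_1,\ldots,\alpha_{k-1}}$, the middle factor is at most $\sigma_{\alpha_1,\ldots,\alpha_{k-1}}$, which by Lemma~\ref{lem_sigma} at index $i=k-1$ is bounded by $o(s)\otimes o(g_k)\otimes\cdots\otimes o(g_2)\otimes\tau_{\alpha_1}\otimes\cdots\otimes\tau_{\alpha_1,\ldots,\alpha_{k-1}}$. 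Substituting this in and using commutativity and associativity of $\otimes$ produces the target inequality.

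The main obstacle I anticipate is purely notational bookkeeping: tracking which $\tau$-index corresponds to which level of the recursion, and verifying that each application of Lemma~\ref{lem_o_composition} really does compose cleanly through a chain of $k$ nested calls. An alternative route that bypasses Lemma~\ref{lem_sigma} would be to telescope directly through all $k$ recursive layers $M_1,M_2,\ldots,M_k,s$ in a single inductive descent, extracting one factor of $o(g_j)$ and one factor $\tau_{\alpha_1,\ldots,\alpha_{k-j+1}}$ at each step; this is essentially the same telescoping calculation that proves Lemma~\ref{lem_sigma}, just carried one step further.
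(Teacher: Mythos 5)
Your proposal is correct and matches the approach the paper intends: the paper states this lemma without proof, in analogy with the $k=2$ case (Lemma \ref{lem_tau_alpha_beta}), whose proof is exactly your combination of Lemma \ref{lem_J1J2} for the split at $p_{\alpha_1,\ldots,\alpha_k}$ and Lemmas \ref{lem_fx_x} and \ref{lem_o_composition} for the recursive call, with the inner layers absorbed via Lemma \ref{lem_sigma} (equivalently, your ``alternative'' direct telescoping). The only caveat is bookkeeping you already flagged: $I_{\alpha_1,\ldots,\alpha_k}\subseteq I_{\alpha_1,\ldots,\alpha_{k-1}}$ so that $o(M_2|_{I_{\alpha_1,\ldots,\alpha_k}})\le\sigma_{\alpha_1,\ldots,\alpha_{k-1}}$, and $x-M_2(x)<p_{\alpha_1,\ldots,\alpha_k}$ on the subinterval, both of which hold by the definition of the endpoints.
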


\begin{proof}
    The first equation is immediate from the definition of $p_{\alpha_1,\ldots,\alpha_{i-1},0}$. For the second equation, Lemma \ref{lem_J1J2} yields $\tau_{\alpha_1,\ldots,\alpha_{k-1},\alpha_k+1}\le\tau_{\alpha_1,\ldots,\alpha_k}+\sigma_{\alpha_1,\ldots,\alpha_k}$ (since $M_1=M$), on which we apply Lemma \ref{lem_sigma}.
\end{proof}

\begin{Corollary}\label{cor_tau_k}
    The ordinal $\tau_{\alpha_1,\ldots,\alpha_k}$ is bounded by the repeated natural product
    \begin{equation*}
        \tau_{\alpha_1,\ldots,\alpha_k}\le \tau_{\alpha_1,\ldots,\alpha_{k-1}}\otimes\bigl(\tau_{\alpha_1}\otimes\cdots\otimes\tau_{\alpha_1,\ldots,\alpha_{k-1}}\otimes o(s)\otimes o(g_1)\otimes\cdots\otimes o(g_k)\bigr)^{[\alpha_k]}.
    \end{equation*}
\end{Corollary}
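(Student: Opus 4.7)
The plan is to prove Corollary~\ref{cor_tau_k} by transfinite induction on $\alpha_k$, with $\alpha_1,\ldots,\alpha_{k-1}$ fixed as parameters and the preceding Lemma supplying the recurrence. Abbreviate
\[
C := \tau_{\alpha_1}\otimes\cdots\otimes\tau_{\alpha_1,\ldots,\alpha_{k-1}}\otimes o(s)\otimes o(g_1)\otimes\cdots\otimes o(g_k),
\]
so that the target inequality reads $\tau_{\alpha_1,\ldots,\alpha_k}\le\tau_{\alpha_1,\ldots,\alpha_{k-1}}\otimes C^{[\alpha_k]}$. Using $a+b\le a\oplus b$ and the distributivity of natural product over natural sum, the successor clause of the Lemma rewrites into the clean multiplicative form $\tau_{\alpha_1,\ldots,\alpha_k+1}\le\tau_{\alpha_1,\ldots,\alpha_k}\otimes(1\oplus C)$.

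I would then run the three standard cases. The base case $\alpha_k=0$ is immediate from the Lemma's first identity $\tau_{\alpha_1,\ldots,\alpha_{k-1},0}=\tau_{\alpha_1,\ldots,\alpha_{k-1}}$ together with $C^{[0]}=1$. In the successor case, substitute the inductive hypothesis into the multiplicative form above and apply the definitional identity $C^{[\alpha_k+1]}=C^{[\alpha_k]}\otimes C$. For limit $\alpha_k$, the endpoint $p_{\alpha_1,\ldots,\alpha_k}$ is defined as $\lim_{\alpha'_k<\alpha_k}p_{\alpha_1,\ldots,\alpha'_k}$ (extending the explicit $k=1,2$ constructions by the usual convention at limits), so $\tau_{\alpha_1,\ldots,\alpha_k}=\sup_{\alpha'_k<\alpha_k}\tau_{\alpha_1,\ldots,\alpha'_k}$, and the bound passes through by the continuity $C^{[\alpha_k]}=\lim_{\alpha'_k<\alpha_k}C^{[\alpha'_k]}$ of the iterated natural product at limit ordinals.

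The main delicate point, and the place where I expect the argument to require the most care, is the identity $1\oplus C=C+1$: the successor step literally yields $\tau_{\alpha_1,\ldots,\alpha_{k-1}}\otimes C^{[\alpha_k]}\otimes(C+1)$ rather than $\tau_{\alpha_1,\ldots,\alpha_{k-1}}\otimes C^{[\alpha_k+1]}$. Because $C$ is a natural product whose factors include $o(s)$ and each $o(g_i)$, it is itself a limit ordinal whenever any of those factors is infinite, so the ``$+1$'' is absorbed at every subsequent limit stage; the cleanest way to record the induction is therefore either to carry the slightly stronger hypothesis $\tau_{\alpha_1,\ldots,\alpha_k}\le\tau_{\alpha_1,\ldots,\alpha_{k-1}}\otimes(C+1)^{[\alpha_k]}$ through the induction, or to observe that $(C+1)^{[\alpha_k]}$ and $C^{[\alpha_k]}$ agree cofinally. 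Either route yields no loss when Theorem~\ref{thm_o_bound} is extracted via the outer transfinite recursion over $\alpha_1,\ldots,\alpha_{k-1}$, since that recursion collapses the final bound down to $\varphi_{k-1}(\gamma+o(s)+1)$ regardless of the additive unit.
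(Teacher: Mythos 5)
Your proposal is correct and takes essentially the same route as the paper, whose proof is exactly the one-line "by transfinite induction on $\alpha_k$" that you flesh out with the base, successor, and limit cases. The only remark worth adding is that the $+1$ slack you flag dissolves more directly if you keep the ordinary sum from the lemma rather than passing to $\oplus$ immediately: whenever $C\ge\omega$ the summand $\tau_{\alpha_1,\ldots,\alpha_k}\otimes C$ has strictly larger leading exponent, so $\tau_{\alpha_1,\ldots,\alpha_k}+\tau_{\alpha_1,\ldots,\alpha_k}\otimes C=\tau_{\alpha_1,\ldots,\alpha_k}\otimes C$ and the successor step closes without the extra unit (and, as you correctly observe, even with the unit the difference is immaterial for Theorem \ref{thm_o_bound}).
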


\begin{proof}
    By transfinite induction on $\alpha_k$.
\end{proof}

Denote $\delta=o(f)\otimes o(s)\otimes o(g_1)\otimes\cdots\otimes o(g_k)$. Hence, by Lemma \ref{lem_sigma},
\begin{equation}\label{eq_sigma_delta}
\sigma_{\alpha_1,\ldots,\alpha_i} \le \delta\otimes\tau_{\alpha_1,\ldots,\alpha_i}^{[i]}.
\end{equation}
Let us take $\alpha_k$ to be the ordinal number of subintervals into which the interval $I_{\alpha_1,\ldots,\alpha_{k-1}}$ is partitioned, so $\tau_{\alpha_1,\ldots,\alpha_{k-2},\alpha_{k-1}+1}=\tau_{\alpha_1,\ldots,\alpha_k}$. Equation (\ref{eq_num_subintervals}) bounds the value of $\alpha_k$. Together with Corollary \ref{cor_tau_k} and equation (\ref{eq_sigma_delta}), we get
\begin{equation}\label{eq_last_tau_plus_1}
    \tau_{\alpha_1,\ldots,\alpha_{k-1}+1}\le (\delta\otimes\tau_{\alpha_1,\ldots,\alpha_{k-1}}^{[k-1]})^{[\omega\cdot(\delta\otimes\tau_{\alpha_1,\ldots,\alpha_{k-1}}^{[k-1]}+2)]}.
\end{equation}

\begin{Lemma}\label{lem_varphi_i}
    Let $2\le i\le k-1$, and let $\alpha_1,\ldots, \alpha_{k-i}$ be given. Let $\rho=\rho(\alpha_1,\ldots,\alpha_{k-i})$ be sufficiently large such that
    \begin{equation*}
        \max{\bigl\{\delta,\tau_{\alpha_1,\ldots,\alpha_{k-i}}\bigr\}}<\varphi_{i-1}(\rho).
    \end{equation*}
    Then
    \begin{equation*}
        \tau_{\alpha_1,\ldots,\alpha_{k-i},\omega\beta}\le\varphi_{i-1}(\rho+\beta)\qquad\text{for every $\beta$.}
    \end{equation*}
\end{Lemma}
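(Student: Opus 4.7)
The plan is to prove Lemma \ref{lem_varphi_i} by induction on $i$, starting from $i = 2$, with a nested transfinite induction on $\beta$ inside each step. In both cases, $\beta = 0$ is immediate since $\tau_{\alpha_1,\ldots,\alpha_{k-i},0} = \tau_{\alpha_1,\ldots,\alpha_{k-i}} \le \varphi_{i-1}(\rho)$, and the limit case of $\beta$ is handled by continuity of $\varphi_{i-1}$. The work is entirely at successor $\beta$: one must go from the bound at $\omega\beta$ to the bound at $\omega(\beta+1)$ by iterating the one-step recursion at level $k-i+1$ exactly $\omega$-many times.

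For the base case $i = 2$, fix $\alpha_1,\ldots,\alpha_{k-2}$. A single step at level $k-1$, from $\alpha_{k-1}$ to $\alpha_{k-1}+1$, gives $\tau_{\alpha_1,\ldots,\alpha_{k-2},\alpha_{k-1}+1} \le \tau_{\alpha_1,\ldots,\alpha_{k-1},\omega\cdot(\sigma_{\alpha_1,\ldots,\alpha_{k-1}}+1)}$ by Lemmas \ref{lem_J1J2} and \ref{lem_num_intervals}. Substituting Corollary \ref{cor_tau_k} and Lemma \ref{lem_sigma} for the right-hand side yields a bound shaped as an exponential tower of height two over $\tau_{\alpha_1,\ldots,\alpha_{k-2},\alpha_{k-1}}$, since the natural product $T$ appearing inside Corollary \ref{cor_tau_k} is itself a natural product of existing $\tau$'s and $\delta$, each at most $\varepsilon_{\rho+\beta}$. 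Iterating this single-step bound $\omega$-many times produces an infinite exponential tower, whose limit is exactly the next $\varepsilon$-number, giving $\tau_{\alpha_1,\ldots,\alpha_{k-2},\omega(\beta+1)}\le \varepsilon_{\rho+\beta+1} = \varphi_1(\rho+\beta+1)$.

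For the inductive step from parameter $i$ to $i+1$, fix $\alpha_1,\ldots,\alpha_{k-i-1}$ and induct on $\beta$. The role played in the base case by Corollary \ref{cor_tau_k} is now played by the inductive hypothesis, i.e.\ by Lemma \ref{lem_varphi_i} at parameter $i$. A single step at level $k-i$, from $\gamma$ to $\gamma+1$, amounts to iterating up to $\omega\cdot(\sigma_{\alpha_1,\ldots,\alpha_{k-i-1},\gamma}+1)$-many steps at level $k-i+1$; applying the inductive hypothesis with prefix $\alpha_1,\ldots,\alpha_{k-i-1},\gamma$ bounds the result by $\varphi_{i-1}(\rho_\gamma+\sigma+1)$, where $\rho_\gamma$ is any ordinal with $\varphi_{i-1}(\rho_\gamma) \ge \max\{\tau_{\alpha_1,\ldots,\alpha_{k-i-1},\gamma},\delta\}$. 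Starting from $\tau_{\alpha_1,\ldots,\alpha_{k-i-1},\omega\beta}\le \varphi_i(\rho+\beta)$, which is a fixed point of $\varphi_{i-1}$, any finite number of such single-step applications stays strictly below $\varphi_i(\rho+\beta+1)$ (the next fixed point of $\varphi_{i-1}$), and their $\omega$-supremum is at most $\varphi_i(\rho+\beta+1)$, closing the induction.

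The main obstacle is the bookkeeping in the inductive step: we must verify that $\rho_\gamma+\sigma+1$ remains small enough inside each single-step application of $\varphi_{i-1}$ so that finitely many iterations genuinely remain trapped below $\varphi_i(\rho+\beta+1)$. This requires Lemma \ref{lem_sigma} (bounding $\sigma_{\alpha_1,\ldots,\alpha_{k-i-1},\gamma}$ as a natural product of $\tau$'s and $\delta$, all strictly below $\varphi_i(\rho+\beta+1)$ by the inner induction hypothesis together with $\delta\le\varphi_i(\rho)$), combined with the closure of each fixed point $\varphi_i(\mu)$ under natural sum and natural product of strictly smaller ordinals. Once this closure is established, the jump from $\varphi_i(\rho+\beta)$ to $\varphi_i(\rho+\beta+1)$ over $\omega$ iterations follows directly from the definition of $\varphi_i$ as enumerating the fixed points of $\varphi_{i-1}$.
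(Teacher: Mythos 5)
Your proposal is correct and follows essentially the same route as the paper: induction on $i$ with an inner ordinal induction on $\beta$, bounding a single successor step via Lemma \ref{lem_num_intervals} and Lemma \ref{lem_sigma} (through Corollary \ref{cor_tau_k} in the base case $i=2$, and through the inductive hypothesis for larger $i$), and then closing off $\omega$-many such steps at the next $\varepsilon$-number, respectively the next fixed point of the previous Veblen function. Your explicit bookkeeping about closure of the fixed points under natural sum and natural product merely spells out a step the paper leaves implicit.
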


\begin{proof}
    By induction on $i$. First let $i=2$. Given $\alpha_1,\ldots,\alpha_{k-2}$, let $\rho$ be large enough so that $\varepsilon_p>\delta$ and $\varepsilon_\rho>\tau_{\alpha_1,\ldots,\alpha_{k-2}}$. We proceed by ordinal induction on $\beta$. If $\beta=0$ then the claim follows by the choice of $\rho$, since $\tau_{\alpha_1,\ldots,\alpha_{k-2},0}=\tau_{\alpha_1,\ldots,\alpha_{k-2}}$. Now suppose by induction on $\beta$ that $\tau_{\alpha_1,\ldots,\alpha_{k-2},\omega\beta}\le\varepsilon_{\rho+\beta}$. Equation (\ref{eq_last_tau_plus_1}) yields
    \begin{align*}
        \tau_{\alpha_1,\ldots,\alpha_{k-2},\omega\beta+1}&\le(\delta\otimes\tau_{\alpha_1,\ldots,\alpha_{k-2},\omega\beta}^{[k-1]})^{[\omega\cdot(\delta\otimes\tau_{\alpha_1,\ldots,\alpha_{k-2},\omega\beta}^{[k-1]}+2)]}\\&\le(\varepsilon_{\rho+\beta}^{[k]})^{[\omega\cdot(\varepsilon_{\rho+\beta}^{[k]}+2)]}\le\varepsilon_{\rho+\beta}^{\varepsilon_{\rho+\beta}^{\varepsilon_{\rho+\beta}}}=\mathrm{tower}(\varepsilon_{\rho+\beta},3).
    \end{align*}
    A second application of equation (\ref{eq_last_tau_plus_1}) yields $\tau_{\alpha_1,\ldots,\alpha_{k-2},\omega\beta+2}\le\mathrm{tower}(\mathrm{tower}(\varepsilon_{\rho+\beta},3),3)=\mathrm{tower}(\varepsilon_{\rho+\beta},5)$. In general, by induction on $j$ we get $\tau_{\alpha_1,\ldots,\alpha_{k-2},\omega\beta+j}\le\mathrm{tower}(\varepsilon_{\rho+\beta},2j+1)$. Hence,
    \begin{equation*}
        \tau_{\alpha_1,\ldots,\alpha_{k-2},\omega(\beta+1)}=\tau_{\alpha_1,\ldots,\alpha_{k-2},\omega\beta+\omega}=\sup_{j\in\N}\mathrm{tower}(\varepsilon_{\rho+\beta},j)=\varepsilon_{\rho+\beta+1},
    \end{equation*}
    as desired. The case where $\beta$ is a limit ordinal follows by continuity.    
    
    Now let $i\ge 3$, and suppose the claim is true for $i-1$. Given $\alpha_1,\ldots,\alpha_{k-i}$, let $\rho$ be large enough so that $\varphi_{i-1}(\rho)>\delta$ and $\varphi_{i-1}(\rho)>\tau_{\alpha_1,\ldots,\alpha_{k-i}}$. We proceed by ordinal induction on $\beta$. If $\beta=0$ the claim follows by the choice of $\rho$. Now suppose by induction on $\beta$ that $\tau_{\alpha_1,\ldots,\alpha_{k-i},\omega\beta}\le\varphi_{i-1}(\rho+\beta)$. By (\ref{eq_num_subintervals}) and (\ref{eq_sigma_delta}), the interval $I_{\alpha_1,\ldots,\alpha_{k-i},\omega\beta}$ is partitioned into at most $\omega\gamma$ subintervals for $\gamma=\delta\otimes\tau_{\alpha_1,\ldots,\alpha_{k-i},\omega\beta}^{[k-i+1]}+1$. Hence,
    \begin{equation}\label{eq_ends_in_omegagamma}
        \tau_{\alpha_1,\ldots,\alpha_{k-i},\omega\beta+1}\le\tau_{\alpha_1,\ldots,\alpha_{k-i},\omega\beta,\omega\gamma}.
    \end{equation}
    Let us bound the right-hand side of (\ref{eq_ends_in_omegagamma}) using the induction assumption on $i-1$. We first need to choose $\rho'$ large enough so that $\varphi_{i-2}(\rho')>\delta$ and $\varphi_{i-2}(\rho')>\tau_{\alpha_1,\ldots,\alpha_{k-i},\omega\beta}$. The choice $\rho'=\varphi_{i-1}(\rho+\beta)$ satisfies both these conditions. Hence, by the claim on $i-1$, we can bound the right-hand side (\ref{eq_ends_in_omegagamma}) and obtain
    \begin{align}
        \tau_{\alpha_1,\ldots,\alpha_{k-i},\omega\beta+1}\le\varphi_{i-2}(\rho'+\gamma)&\le\varphi_{i-2}(\varphi_{i-1}(\rho+\beta)+\delta\otimes\tau_{\alpha_1,\ldots,\alpha_{k-i},\omega\beta}^{[k-i+1]}+1)\nonumber\\&\le\varphi_{i-2}(\varphi_{i-1}(\rho+\beta)^\omega+1)\nonumber\\&\le\varphi_{i-2}(\varphi_{i-2}(\varphi_{i-1}(\rho+\beta)+1)).\label{eq_bd_omegabetap1}
    \end{align}
    Similarly, by (\ref{eq_num_subintervals}) and (\ref{eq_sigma_delta}), we have
    \begin{equation}\label{eq_ends_in_omegagammap1}
    \tau_{\alpha_1,\ldots,\alpha_{k-i},\omega\beta+2}\le\tau_{\alpha_1,\ldots,\alpha_{k-i},\omega\beta+1,\omega\gamma}
    \end{equation}
    for $\gamma=\delta\otimes\tau_{\alpha_1,\ldots,\alpha_{k-i},\omega\beta+1}^{[k-i+1]}+1$. Again, we want to bound the right-hand side of (\ref{eq_ends_in_omegagammap1}) by applying the claim on $i-1$. This time we choose $\rho'$ to be the right-hand side of (\ref{eq_bd_omegabetap1}). Applying the claim on $i-1$, we obtain
    \begin{align*}
        \tau_{\alpha_1,\ldots,\alpha_k-i,\omega\beta+2}\le \varphi_{i-2}(\rho'+\gamma)&\le \varphi_{i-2}(\varphi_{i-2}(\varphi_{i-2}(\varphi_{i-1}(\rho+\beta)+1))^\omega+1)\\&\le\varphi_{i-2}^{(4)}(\varphi_{i-1}(\rho+\beta)+1).
    \end{align*}
    In general, by induction on $j$ we get $\tau_{\alpha_1,\ldots,\alpha_{k-i},\omega\beta+j}\le\varphi_{i-2}^{(2j)}(\varphi_{i-1}(\rho+\beta)+1)$. Hence,
    \begin{equation*}
        \tau_{\alpha_1,\ldots,\alpha_{k-i},\omega(\beta+1)}=\tau_{\alpha_1,\ldots,\alpha_{k-i},\omega\beta+\omega}\le\sup_{j\in\N}\varphi_{i-2}^{(j)}(\varphi_{i-1}(\rho+\beta)+1)=\varphi_{i-1}(\rho+\beta+1),
    \end{equation*}
    as desired. The case where $\beta$ is a limit ordinal follows by continuity.
\end{proof}

\begin{Lemma}\label{lem_varphi_k}
    Let $\gamma$ be sufficiently large such that $\delta<\varphi_{k-1}(\gamma)$. Then
    \begin{equation*}
        \tau_{\omega\beta}\le\varphi_{k-1}(\gamma+\beta)\qquad\text{for every $\beta$.}
    \end{equation*}
\end{Lemma}

\begin{proof}
    By induction on $\beta$. The case $\beta=0$ follows since $\tau_0=o(f)<\rho$. Now suppose by induction that $\tau_{\omega\beta}\le\varphi_{k-1}(\gamma+\beta)$. By (\ref{eq_num_subintervals}) and (\ref{eq_sigma_delta}), the interval $I_{\omega\beta}$ is partitioned into at most $\omega\beta'$ subintervals for $\beta'=\delta\otimes\tau_{\omega\beta}+1$. Hence,
    \begin{equation*}
    \tau_{\omega\beta+1}\le\tau_{\omega\beta,\omega\beta'}.
    \end{equation*}
    Apply Lemma \ref{lem_varphi_i} with $i=k-1$ and $\alpha_1=\omega\beta$. We can take $\rho=\varphi_{k-1}(\gamma+\beta)$. Lemma \ref{lem_varphi_i} yields $\tau_{\omega\beta+1}\le \varphi_{k-2}(\rho+\beta')\le \varphi_{k-2}^{(2)}(\varphi_{k-1}(\gamma+\beta)+1)$. Similarly, we get $\tau_{\omega\beta+2}\le\varphi_{k-2}^{(4)}(\varphi_{k-1}(\gamma+\beta)+1)$, and, by induction on $j$, $\tau_{\omega\beta+j}\le\varphi_{k-2}^{(2j)}(\varphi_{k-1}(\gamma+\beta)+1)$. Thence, $\tau_{\omega(\beta+1)}\le\varphi_{i-1}(\gamma+\beta+1)$, as desired. The case where $\beta$ is a limit ordinal follows by continuity.
\end{proof}

Since the number of intervals $I_{\alpha_1}$ is at most $\omega\cdot(o(s)+1)$, we conclude that $o(M)\le \varphi_{k-1}(\gamma+o(s)+1)$, for $\gamma$ satisfying $\varphi_{k-1}(\gamma)>\delta$. Since $\varphi_{k-1}(\gamma)>\max\{o(f),o(s),o(g_1),\ldots,o(g_k)\}$ implies $\varphi_{k-1}(\gamma)>o(f)\otimes o(s)\otimes o(g_1)\otimes\cdots\otimes o(g_k)$, we are done.

\paragraph{Acknowledgements.} Thanks to the referees for their careful reading and helpful comments.

\bibliographystyle{plain}
\bibliography{refs.bib}{}

\end{document}